\definecolor{red}{rgb}{1,0,0}
\newcommand{\vvirg}{ , \dots , }
\newcommand{\ttimes}{ \times \cdots \times }
\newcommand{\textsum}{{\textstyle \sum}}
\newcommand{\textprod}{{\textstyle \prod}}
\newcommand{\textfrac}[2]{{\textstyle \frac{#1}{#2}}}
\newcommand{\bfN}{\mathbf{N}}
\newcommand{\calB}{\mathcal{B}}
\newcommand{\calD}{\mathcal{D}}
\newcommand{\calI}{\mathcal{I}}
\newcommand{\calL}{\mathcal{L}}
\newcommand{\calM}{\mathcal{M}}
\newcommand{\calO}{\mathcal{O}}
\newcommand{\calS}{\mathcal{S}}
\newcommand{\calU}{\mathcal{U}}
\newcommand{\calV}{\mathcal{V}}
\newcommand{\calY}{\mathcal{Y}}
\newcommand{\calZ}{\mathcal{Z}}
\newcommand{\bbC}{\mathbb{C}}
\newcommand{\bbN}{\mathbb{N}}
\newcommand{\bbP}{\mathbb{P}}
\newcommand{\bbR}{\mathbb{R}}
\newcommand{\frakB}{\mathfrak{B}}
\newcommand{\bfxi}{\boldsymbol{\xi}}
\renewcommand{\phi}{\varphi}
\newcommand{\dashto}{\dashrightarrow}
\renewcommand{\tilde}[1]{\widetilde{#1}}
\renewcommand{\bar}[1]{\overline{#1}}
\newcommand{\rank}{\mathrm{rank}}
\DeclareMathOperator{\codim}{codim}
\newcommand{\Mat}{\mathrm{Mat}}
\newtheorem{theorem}{Theorem}[section]
\newtheorem{proposition}[theorem]{Proposition}
\newtheorem{corollary}[theorem]{Corollary}
\newtheorem{lemma}[theorem]{Lemma}
\newtheorem{conj}[theorem]{Conjecture}
\theoremstyle{definition}
\newtheorem{definition}[theorem]{Definition}
\newtheorem{problem}[theorem]{Problem}
\newtheorem{remark}[theorem]{Remark}
\newtheorem{example}[theorem]{Example}
\newcommand{\N}{\mathbb{N}}
\renewcommand{\int}{\operatorname{int}}
\newcommand{\lprod}{\scalebox{1.2}{\guilsinglleft}}
\newcommand{\rprod}{\scalebox{1.2}{\guilsinglright}}
\DeclareMathOperator{\conv}{conv}
\DeclareMathOperator{\crit}{crit}
\title{The Geometry of Discotopes}
\author{Fulvio Gesmundo and Chiara Meroni}
\address[F. Gesmundo]{Max Planck Institute for Mathematics in the Sciences, Inselstrasse 22, 04103 Leipzig, Germany -- (current) Saarland University, Saarbr\"ucken, Germany}
\email{gesmundo@cs.uni-saarland.de}
\address[C. Meroni]{Max Planck Institute for Mathematics in the Sciences, Inselstrasse 22, 04103 Leipzig, Germany}
\email{chiara.meroni@mis.mpg.de}
\subjclass[2010]{(primary) 14P10, 52A99; (secondary) 52A21, 14M12}
\keywords{discotope, zonoid, convex body, algebraic boundary, determinantal variety}
\date{}
\begin{document}

\begin{abstract}
We study a class of semialgebraic convex bodies called discotopes. These are instances of zonoids, objects of interest in real algebraic geometry and random geometry. We focus on the face structure and on the boundary hypersurface of discotopes, highlighting interesting birational properties which may be investigated using tools from algebraic geometry. When a discotope is the Minkowski sum of two-dimensional discs, the Zariski closure of its set of extreme points is an irreducible hypersurface. In this case, we provide an upper bound for the degree of the hypersurface, drawing connections to the theory of classical determinantal varieties. 
\end{abstract}

\maketitle

\section{Introduction}

Discotopes are finite Minkowski sums of generalized discs in the real Euclidean space. They were introduced in \cite{AdiSan:WhitneyNumbersArrangements} for the combinatorial study of matroids associated to subspace arrangements. They appeared in the context of convex geometry in \cite{MatMer:FiberConvexBodies}, where they provide an example of a semialgebraic convex body having a  non-semialgebraic fiber body.

In this work, we investigate geometric features of discotopes. Our study is motivated by the \emph{zonoid problem}, introduced in \cite{Bol:ZonoidProblem}, but already appearing in \cite{Blas:Differentialgeometrie}: this problem consists in determining whether a given convex body is a zonoid. These special convex sets play an important role in convex geometry, measure theory, functional analysis and random geometry  \cite{Bol:ClassConvexBodies,SchWeil:ZonoidsRelatedTopics,Vit:ExpectedAbsRandDetZonoids,GoodWei:ZonoidsGeneralizations}. More recently, connections to enumerative geometry and real intersection theory were drawn \cite{BuerLer:ProbabilisticSchubertCalculus}, which led to the introduction of the zonoid algebra in \cite{BreBueLerMat:ZonoidAlgebra} as a probabilistic version of cohomology.

Zonoids are limits, in the Hausdorff metric, of zonotopes. The latter are finite Minkowski sums of line segments. Zonotopes are the only polytopes that are zonoids. They are relatively well-understood: for instance, it is known that a polytope is a zonotope if and only if all its $2$-dimensional faces are (translates of) centrally symmetric polygons \cite{Bol:ClassConvexBodies,Schn:UberIntegralTheorieKonvexenKorper}. On the other hand, a simple characterization of zonoids seems hopeless, and in full generality even the decidability of the zonoid problem is not understood. We highlight two major difficulties. In \cite{Weil:BlaschkesLocalZonoid}, it was shown that being a zonoid is not a local property in the sense that for every convex body $K$, and every point $p$ of its boundary $\partial K$, there is a zonoid whose support function coincides with the support function of $K$ in a neighborhood of $p$. Moreover, in \cite{Wil:ZonoideVerwandteKlassen}, it was shown that being a zonoid is not a property characterized by projections; indeed there exist convex bodies which are not zonoids but such that all their projections are zonoids.

Restricting to the subclass of semialgebraic convex bodies would potentially make the problem easier. For instance, in the algebraic setting, the rigidity of the Zariski topology implies that global properties can be checked locally. The tameness of a particular class of semialgebraic zonoids was proved in \cite{LerMat:TamenessZonoids}. However, the zonoid problem is open even in the simplest non-trivial case of semialgebraic convex bodies in $\bbR^3$, see e.g. \cite[Problem 12]{Sturm:TwentyFacets}.

Discotopes form a special subclass of semialgebraic zonoids. They are a first possible generalization of zonotopes, still amenable to be studied with tools from algebra, geometry and combinatorics. A discotope is a finite Minkowski sum of (generalized) higher dimensional discs: from this point of view, zonotopes correspond to the special case of $1$-dimensional discs.

In this work, we investigate a number of properties of discotopes. Section \ref{sec: general setting} provides the formal definition of discotopes and states some of their basic features. In Section \ref{sec: faces}, we describe the facial structure of these convex bodies, and we introduce a particular subvariety $\calS$ of their (algebraic) boundary, whose properties are further studied in the rest of the paper. Section \ref{sec: joins} provides a full characterization of $\calS$ in a special range. Section \ref{sec:exposed_points} describes its role in the geometry of the exposed points of the discotope. 
In Section \ref{sec:2discs}, we study discotopes that are Minkowski sums of $2$-dimensional discs; we prove that in this case $\calS$ is an irreducible hypersurface and provide an upper bound for its degree. As a by-product of this result, we prove that certain non-generic linear sections of the classical determinantal variety are irreducible and of the expected dimension. 
Section \ref{sec: dice} is devoted to the study of a specific example, the dice (see Figure \ref{fig:dice}), which presents peculiar birational properties. Finally, in Section \ref{sec: conclusions}, we propose some open problems and conjectures: in particular, Conjecture \ref{conj:irr_S} predicts that the variety $\calS$ is irreducible under minimal assumptions.

\begin{figure}[ht!]
\centering
\includegraphics[width=0.3\textwidth]{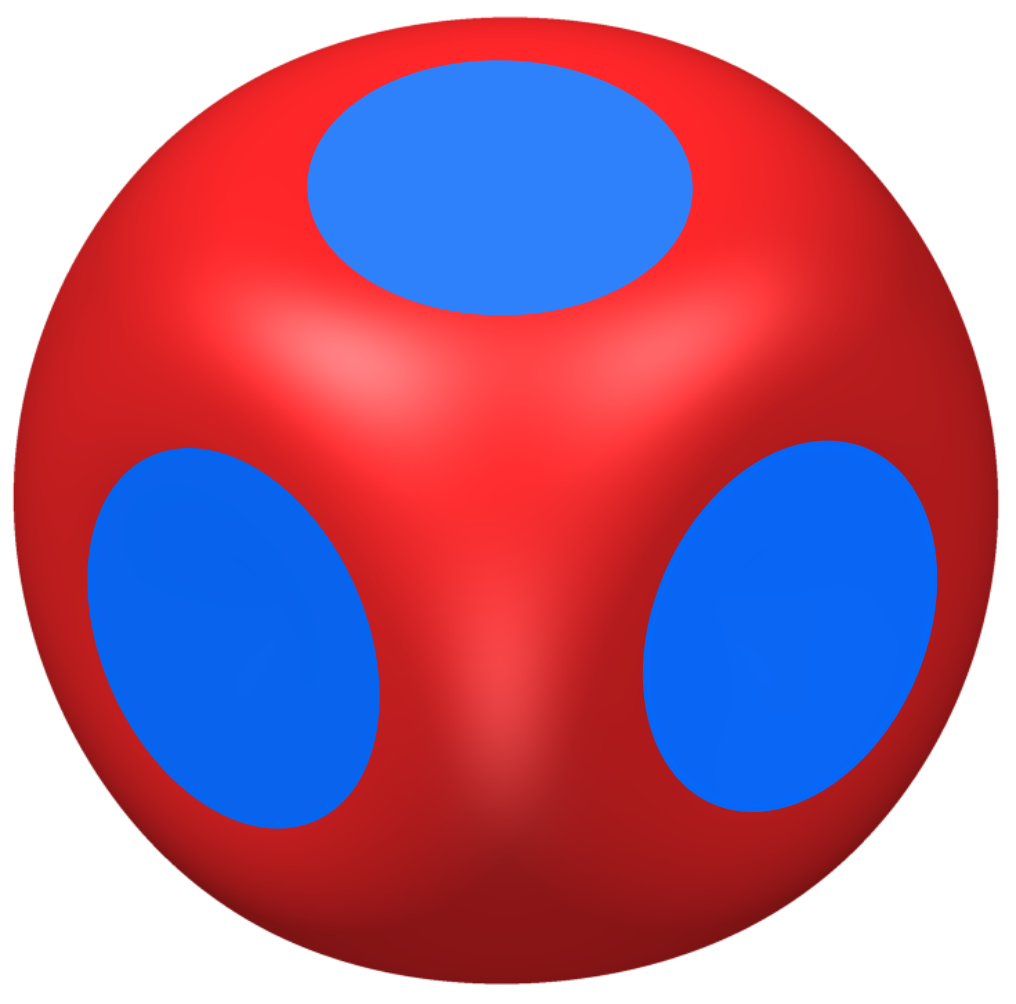}
\caption{The dice: the Minkowski sum of three discs in $\bbR^3$.} \label{fig:dice}
\end{figure}

\subsection*{Acknowledgements}
We would like to thank Daniele Agostini for our enlightening discussions on the applications of Bertini's Theorem. We are grateful to Rainer Sinn and Bernd Sturmfels for their frequent and useful comments during the development of this project. Finally, we thank Marie-Charlotte Brandenburg, Paul Breiding, Lukas K\"uhne, Simone Naldi, Felix Rydell and Simon Telen for helpful discussions and feedback.

\section{Setting}\label{sec: general setting}

We recall some basic notions from convex geometry and we refer to \cite{Sch:ConvexBodies} for the theory.  Let $S^{d-1}$ denote the $(d-1)$-dimensional sphere in $\bbR^d$. Given a convex body $K \subseteq \bbR^d$, its support function is 
\begin{equation}
\begin{aligned}
 h_K : S^{d-1} &\to \bbR \\
 u &\mapsto \max\{ \lprod u , p\rprod : p \in K\},
\end{aligned}
\end{equation}
where $\lprod \cdot \, , \cdot \rprod$ denotes the standard inner product of $\bbR^d$. The support function of $K$ uniquely determines $K$; moreover, support functions are additive with respect to the Minkowski sum, in the sense that $h_{K_1 + K_2} = h_{K_1} + h_{K_2}$. 
The set $K^u = \{ p \in K : \lprod u , p \rprod = h_K(u)\}$ is the \emph{face} of $K$ exposed by $u$; clearly $K^u$ is a subset of the topological boundary of $K$; if $p \in K^u$, we say that $p$ is exposed by $u$. In particular, if $\{p\} = K^u$, then $p$ is called exposed point. Let $\partial K \subseteq \bbR^d$ denote the topological boundary of $K$. If $K$ is a semialgebraic convex body, then $\partial K$ is a semialgebraic set of codimension one in $K$.

Many geometric features of semialgebraic convex bodies can be studied with tools from classical algebraic geometry. The \emph{algebraic boundary} represents this transition from real convex geometry to complex algebraic geometry.
It is the Zariski closure of $\partial K$ in $\bbC^d$, that will be denoted by  $\partial_a K$.
A full dimensional convex body $K$ is semialgebraic if and only if $\partial_a K$ is a hypersurface \cite[Proposition 2.9]{Sinn:AlgebraicBoundaries}. 

In this section we introduce discotopes and highlight some of their basic properties.
Given $n\in \bbN$, let $D$ be the standard unit ball in $\bbR^n$, that is
\begin{equation}
D= \{ (x_1 \vvirg x_n) : x_1^2+\ldots+x_n^2\leq 1\}.
\end{equation}
Fix $n,d \in \bbN$, $n\leq d$, let $\calB: = \{b_1,\ldots ,b_n\}$ be a set of linearly independent vectors of $\bbR^d$ and let $A_{\calB} : \bbR^n \to \bbR^d$ be the linear map mapping the $i$-th standard basis element $e_i$ of $\bbR^n$ to $b_i$. The \emph{generalized disc} $D_{\mathcal{B}}$ is the image of $D$ via $A_{\calB}$. Throughout, generalized discs are simply called discs.

The topological boundary $\partial D_{\calB}$ is a real algebraic hypersurface in the linear span $\langle \calB \rangle$: its ideal is defined by $d-n$ linear forms determining $\langle \calB \rangle$ and a single inhomogeneous quadric $q_\calB - 1$, where $q_\calB$ is the quadratic form associated to the matrix $(A_{\calB}) (A_{\calB})^T$. In particular, generalized discs are semialgebraic sets. 

\begin{remark}
For every $d$ and every choice of $\mathcal{B}$, the generalized disc $D_{\mathcal{B}}$ is a zonoid. This is immediate from the fact that linear images of zonoids are zonoids. In particular for $d=1$, generalized discs are all the compact segments centered at the origin; for higher $d$, generalized discs are ellipsoids centered at the origin.
\end{remark}

\begin{definition}
Given the generalized discs $D_{\calB_1}, \ldots, D_{\calB_N}$ in $\bbR^d$,
the \emph{discotope} $\calD_{\frakB}$ associated to $\frakB = \{ \calB_{j} \,|\,  j= 1 \vvirg N\}$ is their Minkowski sum
\begin{equation}
 \calD_{\frakB} = D_{\calB_1} + \cdots + D_{\calB_N}.
\end{equation}
Write $D_i := D_{\calB_i}$ if no confusion arises.
Let $N_m$ be the number of discs of dimension $m$ among $D_1, \ldots, D_N$. The \emph{type} of the discotope $\calD_{\mathfrak{B}}$ is the integer vector $\bfN = (N_1,\ldots, N_{d}) \in \N^{d}$. Note that $N = \sum N_m$.
\end{definition}

Usually, we will be interested in the case where the sets $\calB_{j}$ are chosen generically. More precisely, we say that a property holds for the \emph{generic discotope} of type $\bfN$ if the sets $\frakB$ for which it does not hold form a proper Zariski closed subset of the set of all possible bases.

In the case $\bfN = (N, 0, \ldots, 0)$, all the generalized discs are segments centered at the origin: the associated discotope is a zonotope centered at the origin \cite[Section 7.3]{Zieg:LecturesPolytopes}. The case $\bfN = (0,N,0)$ of discotopes in $\bbR^3$ was studied in \cite{MatMer:FiberConvexBodies} in the context of fiber convex bodies. In \cite{AdiSan:WhitneyNumbersArrangements}, discs of higher dimensions were considered, suitably rescaled so that their volume is normalized.

Discotopes can be realized as the image of the addition map restricted to the product of the discs. More precisely, define $\Sigma$ to be the (complex) \emph{addition map}
\begin{align*}
 \Sigma: (\bbC^d)^{N} &\to \bbC^d \\
 (\xi_{j})_{j= 1 \vvirg N} &\mapsto \textsum \xi_{j}.
\end{align*}
Then, the discotope $\calD$ associated to $\frakB$ is the image of $\prod_{j} D_{j} \subseteq (\bbR^d)^{N}$ under $\Sigma$. In particular, the Projection Theorem for semialgebraic sets (see, e.g., \cite[Section 2.2]{BCR:RealAG}) guarantees that $\calD$ is semialgebraic. 

Since Minkowski sums of zonoids are zonoids, every discotope is a zonoid. In particular, discotopes form a class of semialgebraic zonoids and one may wonder whether all semialgebraic zonoids centered at the origin arise in this way. This is not the case. An example of a semialgebraic zonoid which is not a discotope is the unit ball of the $L^4$-norm $\{ x_1^4 + x_2^4 \leq 1\}$ in $\bbR^2$: indeed, there is no discotope in $\bbR^2$ whose boundary is an irreducible curve of degree $4$, see Remark \ref{rmk:L4ball}.

We point out that a discotope is full dimensional if and only if $\sum_{j} \langle \calB_j \rangle = \bbR^d$. In particular, a necessary condition for this to happen is that $\sum_1^d m N_m \geq d$. For a generic discotope this condition is also sufficient. We always assume that $\calD$ is full dimensional: this is not restrictive as one can always restrict the analysis to the linear span $H  = \sum_{j} \langle \calB_j \rangle = \langle \calD \rangle$.

\section{The faces of the discotope}\label{sec: faces}

In this section we give a characterization of the exposed faces of discotopes. Moreover, we introduce a complex algebraic variety associated to a discotope, called its purely nonlinear part, which will be the main object of study in the rest of the paper.

Consider the discotope $\calD = D_1 + \ldots + D_N$.
For every disc $D_{j}$, let $C_{j} = S^{d-1} \cap \langle D_{j} \rangle ^\perp$, that is the unit sphere of dimension $d - \dim D_{j}-1$ consisting of directions orthogonal to $D_{j}$. Let $\calU = S^{d-1}\setminus \left( \bigcup_j C_{j} \right)$, which is Zariski open in $S^{d-1}$. If $u \in \calU$, then for every disc $D_j$ the face $D_j^u$ exposed by $u$ is a single point. 

As a consequence, if $u \in \calU$, then the face of the discotope $\calD^u$ consists of a single point. To see this, let $p =  \sum \xi_j \in \calD^u$ be a point of the face exposed by $u$. Then
\begin{equation}
h_{\calD}(u) = \lprod u, p \rprod = \sum_{j} \lprod u, \xi_{j} \rprod > \sum_{j} \lprod u, \tilde{\xi}_{j} \rprod
\end{equation}
for any other $\tilde{\xi}_{j} \in D_{j}$. Therefore $\calD^u = \{ p \}$ and hence $p$ is an extreme exposed point of $\calD$.

On the other hand if $u \notin \calU$, let $J \subseteq \{ 1 \vvirg N\}$ be the maximal subset of indices such that $u \in \bigcap_{j \in J} C_{j}$; then $u^{\perp}$ contains $ \sum_{j \in J}\langle \calB_j \rangle$. In this case the face of $\calD$ exposed by $u$ is (a properly translated copy of) a smaller discotope $\calD'$, given by the Minkowski sum of the discs $D_{j}$ for $j\in J$. From this description, one verifies a property that holds more generally for zonoids: every proper face of a zonoid $Z$ is a translate of a zonoid of lower dimension, which is a summand of $Z$.

In particular, the exposed faces of $\calD$ of dimension $k$ are given by 
\begin{equation}
\sum_{j\in J} D_{j} + \sum_{i \notin J} \{p_i\}
\end{equation}
where $p_i\in \partial D_{i}$ are certain suitable points and $J$ is such that $\dim \big(\sum_{j \in J} \langle \calB_j \rangle\big)  = k$.

\begin{remark}\label{rmk:zonotope_discotope}
A discotope $\calD$ of type $\bfN = (N_1 \vvirg N_{d})$ is the Minkowski sum of a zonotope $\calZ$ given by $N_1$ segments and a discotope $\calD'$ of type $(0 , N_2 \vvirg N_{d})$:
\begin{equation}
\calD = \calZ + \calD'.
\end{equation}
Since the convex hull of a Minkowski sum equals the Minkowski sum of convex hulls, the discotope $\calD$ is the convex hull of copies of $\calD'$ placed at the vertices of $\calZ$. As a consequence, many of the geometric properties of $\calD$ only depend on analogous properties of $\calD'$. For instance, the algebraic study of extreme points of $\calD$ can be reduced to the one of extreme points of $\calD'$. This can be visualized in the following example.
\end{remark}

\begin{example}\label{ex:discotopes_segment}
Let $D_1, D_2, D_3$ be the discs in $\bbR^3$ defined by
\begin{align}
D_1 &= \{(x_1,x_2,x_3) : x_3 = 0, -1 \leq x_1=x_2 \leq 1\}, \\
D_2 &= \{(x_1,x_2,x_3) : x_1 = 0, x_2^2+x_3^2\leq 1\}, \\
D_3 &= \{(x_1,x_2,x_3) : x_2 = 0, x_1^2+x_3^2\leq 1\}.
\end{align}
Consider the associated discotope $\calD = D_1 +D_2 + D_3$, shown in Figure \ref{fig:space_discotopes}, left. Faces of dimension $0, 1$ and $2$ are represented in red, green and blue, respectively. The red points are exposed and arise as $\xi_1+\xi_2+\xi_3$, for certain $\xi_i \in \partial D_i$. Every green segment arises as $D_1 + \xi_2 + \xi_3$, for certain $\xi_i \in \partial D_i$. The four blue discs (only two of which are visible) come in pairs: two are obtained as $\xi_1+D_2+\xi_3$ and the other two as $\xi_1+\xi_2+D_3$, for certain $\xi_i \in \partial D_i$.
\begin{figure}[ht!]
\centering
\begin{subfigure}{0.49\textwidth}
\centering
\includegraphics[width=0.75\textwidth]{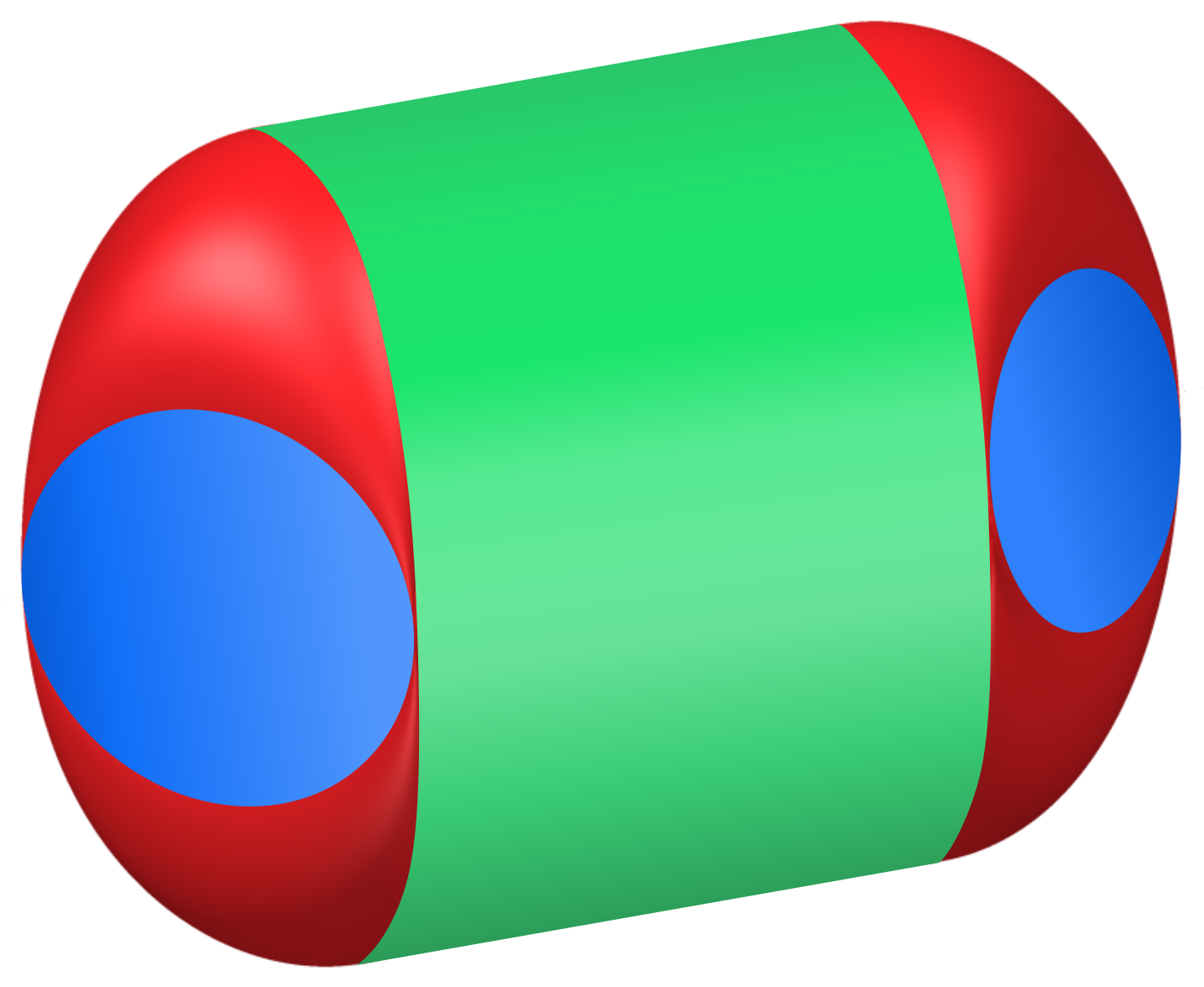}
\label{fig:d_1,2,0}
\end{subfigure}
\begin{subfigure}{0.49\textwidth}
\centering
\includegraphics[width=0.4\textwidth]{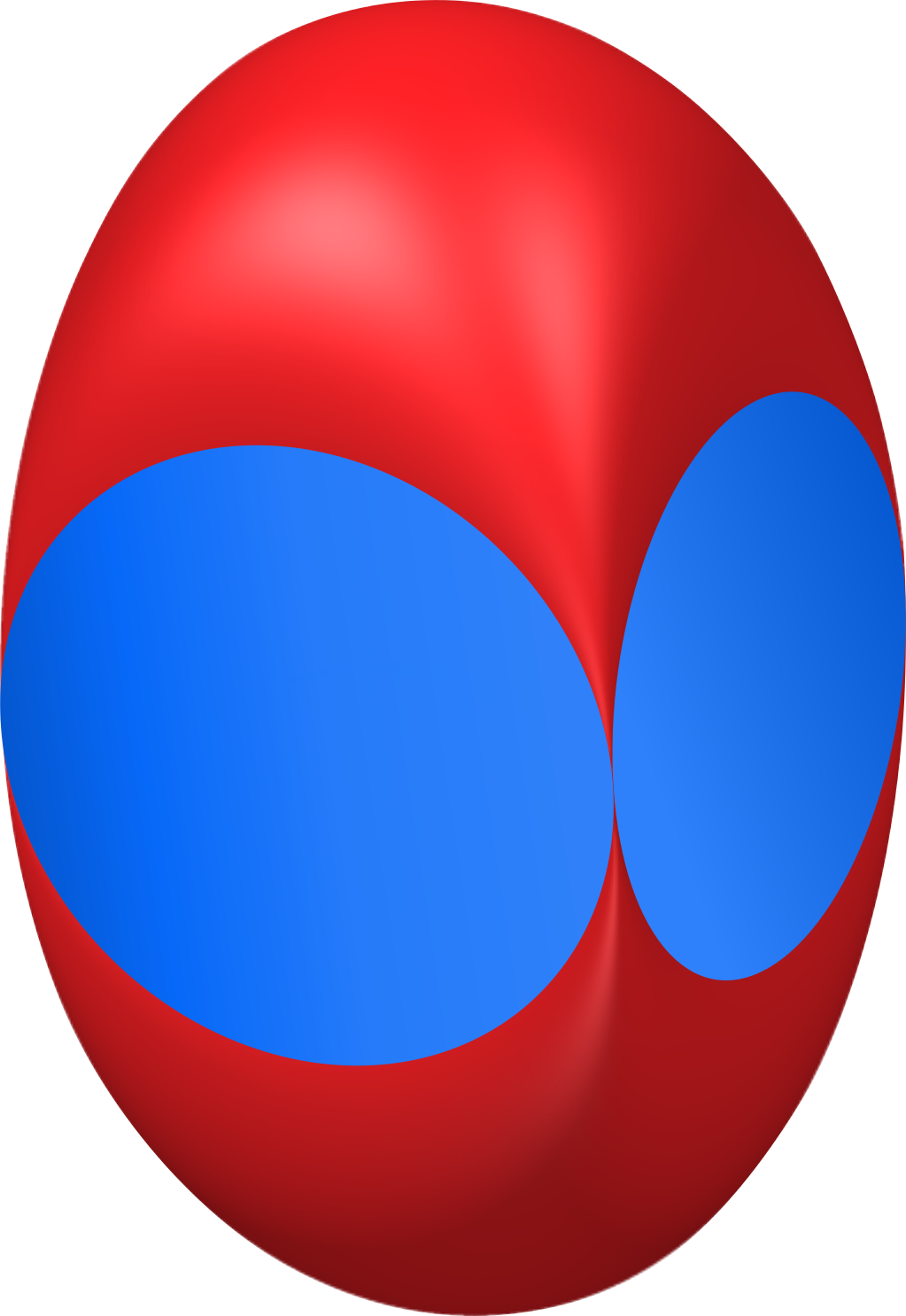}
\end{subfigure}
\caption{Left: a discotope of type $\bfN = (1,2,0)$. Right: a discotope of type $\bfN~=~(0,2,0)$. Faces of dimension $0$ are in red, faces of dimension $1$ are in green and faces of dimension $2$ are in blue.}
\label{fig:space_discotopes}
\end{figure}
\end{example}
As observed in Remark \ref{rmk:zonotope_discotope}, $\calD = \calZ + \calD'$ where $\calZ = D_1$ is a zonotope and $\calD' = D_2+D_3$ is a discotope with $N_1 = 0$, shown in Figure \ref{fig:space_discotopes}, right. The algebraic boundary $\partial_a \calD'$ consists of five irreducible components: four planes and the quartic surface
\begin{equation}
\calS = \{ x_1^4-2 x_1^2 x_2^2+x_2^4+2 x_1^2 x_3^2+2 x_2^2 x_3^2+x_3^4-4 x_3^2 = 0 \}.
\end{equation}
The latter is the Zariski closure of the set of extreme points of $\calD'$. Instead, the Zariski closure of the set of exposed points of $\calD$ is the union of two copies of $\calS$, translated by the extrema of $D_1$, i.e., the vectors $\pm (1,1,0)$.

Next, we associate to $\calD$ an complex algebraic variety that contains all extreme points of the discotope. 
\begin{definition}\label{def:S}
Let $\calD^\partial = \Sigma \left( \textprod_{j} \partial D_{j} \right) \subseteq \calD$. The \emph{purely nonlinear part} of the discotope $\calD$ is 
\begin{equation}
\calS = \overline{\calD^{\partial} \cap \partial \calD},
\end{equation}
the Zariski closure of $\calD^{\partial} \cap \partial \calD$ in $\bbC^d$.
\end{definition}
 By definition if $p$ is an extreme point of $\calD$, then $p\in \calS$. Therefore, by the Krein--Milman Theorem \cite[Section II.3]{Barvinok:ConvexityBook}, 
$\calD = \conv ( \calS \cap \partial \calD )$ is the convex hull of $\calS \cap \partial \calD$.

In particular, $\calS$ carries all the information regarding the extreme points of $\calD$. In general, the variety $\calS$ may have several irreducible components, possibly of different dimension. In fact, it is a priori not clear whether $\calS$ coincides with the Zariski closure of the set of exposed point of $\calD$. We will prove some results in this direction in Section \ref{sec:exposed_points}.
In particular, Corollary \ref{cor:dim_S} guarantees that when the discs are chosen generically, $\calS$ has dimension $d-1$ (possibly with lower dimensional components) in $\bbC^d$ if and only if the following non-degeneracy condition holds:
\begin{equation}\label{eq:condition_S}
\sum_{m=1}^{d} (m-1) N_m\geq d-1.
\end{equation}
Notice that this condition implies the non-degeneracy condition $\sum_1^{d} m N_m \geq d$ for the discotope, and it is immediately satisfied if $N_d\geq 1$. 

\begin{remark}\label{rmk:irr_top_discs}
In the special case of discotopes of type $\bfN = (0 \vvirg 0, N)$, all the boundary points of $\calD$ are exposed and therefore $\calS$ coincides with the Zariski closure of the set of exposed points. Further, $\partial \calD$ is smooth (see, e.g., \cite{BJ:SmoothnessMinkowskiSum}), which guarantees that $\calS$ is irreducible: indeed, if it was reducible, any two irreducible components would intersect on $\partial \calD$, in contradiction with its smoothness. Figure \ref{fig:planar_discotopes} shows a discotope $\calD$ obtained as the sum of three ellipses in $\bbR^2$.
The topological boundary $\partial \calD$ is smooth and coincides with one of the connected components of the real locus of $\partial_a \calD$. These properties are further discussed in Section \ref{sec:2discs}.
\begin{figure}[ht!]
\centering
\includegraphics[width=0.3\textwidth]{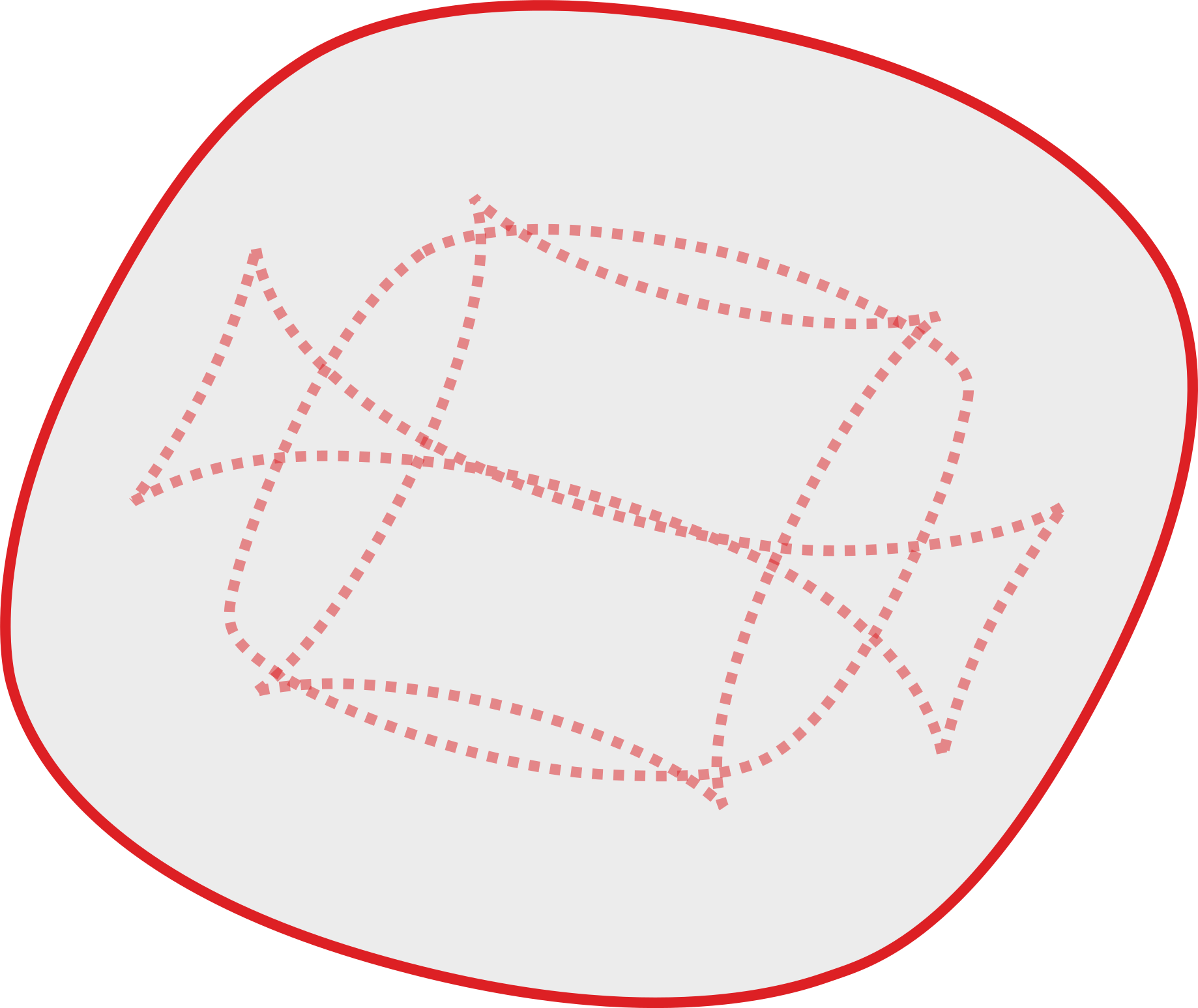}
\caption{A discotope of type $\bfN = (0,3)$. Its algebraic boundary is an irreducible curve of degree $24$. The dashed curves represent the real points of the algebraic boundary that are not part of the topological boundary.}
\label{fig:planar_discotopes}
\end{figure}
\end{remark}

\section{Joins of quadrics}\label{sec: joins}

If \eqref{eq:condition_S} holds with the reverse inequality, then we characterize the purely nonlinear part $\calS$ as an affine version of the geometric join of the quadrics $\partial_a D_i$.
The theory is developed classically in the projective setting, see, e.g., \cite[Chapter 1]{Russo:GeometrySpecialProjVars} and \cite{FleOCVog:JoinsAndIntersections}. In this section we translate some of these projective notions to the affine space and apply them to $\calS$.

Given two (complex) projective varieties $X,Y \subseteq \bbP^d$, their join is the projective variety $J(X,Y) = \bar{ \{ p \in \langle x,y \rangle : x \in X , y \in Y \}}$. We are concerned with the properties of the join summarized in the following lemma, which is a consequence of \cite[Example 18.17]{Harris:AlgGeo}.
\begin{lemma}\label{lemma: joins in general}
 Let $X,Y \subseteq \bbP^d$ be irreducible varieties. Then $J(X,Y)$ is irreducible. If  $X \cap Y = \emptyset$ then $\dim J(X,Y) = \dim X + \dim Y + 1$. Furthermore, if $\dim X + \dim Y + 1 < d$, then $\deg( J (X,Y)) = \deg(X) \deg(Y)$.
\end{lemma}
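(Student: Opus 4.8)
The plan is to realize the join through the affine cone picture, which makes irreducibility and the dimension count uniform, and then to treat the degree by a generic linear projection reducing it to a Bézout computation. Throughout write $n = \dim X$, $m = \dim Y$, and let $\hat{X}, \hat{Y} \subseteq \bbC^{d+1}$ denote the affine cones over $X$ and $Y$, which are irreducible since $X$ and $Y$ are. The basic object is the addition morphism
\begin{equation}
 \sigma : \hat{X} \times \hat{Y} \to \bbC^{d+1}, \qquad (v,w) \mapsto v + w .
\end{equation}
As $(v,w)$ ranges over the two cone-lines, $[\sigma(v,w)]$ sweeps out the line $\langle [v],[w]\rangle$, so the closure of $\im \sigma$ is exactly the affine cone $\hat{J}(X,Y)$. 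For irreducibility I would then argue that $\hat{X}\times \hat{Y}$ is irreducible and that the closure of the image of an irreducible variety under a morphism is irreducible; hence $J(X,Y)$ is irreducible.

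For the dimension I would stay with the cones. We have $\dim(\hat{X}\times \hat{Y}) = (n+1)+(m+1)$, and since $\hat{J}(X,Y) = \overline{\im\sigma}$, the join has dimension $n+m+1$ as soon as $\sigma$ is generically finite onto its image. This is exactly where disjointness enters: $X \cap Y = \emptyset$ is equivalent to $\hat{X}\cap\hat{Y} = \{0\}$, and the fibre of $\sigma$ over a point $s$ of its image is isomorphic to $\hat{X}\cap (s-\hat{Y})$ via $v \mapsto (v, s-v)$. The condition $\hat{X}\cap\hat{Y}=\{0\}$ is what prevents these fibres from acquiring positive dimension for general $s$; that it forces generic finiteness is precisely the classical join-dimension formula $\dim J(X,Y) = \dim X + \dim Y - \dim(X\cap Y)$ (with $\dim\emptyset = -1$), for which I would cite \cite{Russo:GeometrySpecialProjVars, FleOCVog:JoinsAndIntersections}, or equivalently \cite[Example 18.17]{Harris:AlgGeo}.

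For the degree, assume now $k := \dim J(X,Y) = n+m+1 < d$. I would compute $\deg J(X,Y) = \#\big(J(X,Y)\cap L\big)$ for a general linear subspace $L$ of dimension $d-k = d-n-m-1$, and evaluate the count by projecting away from $L$. Since $L$ is general it misses both $X$ and $Y$, so the projection $\pi_L : \bbP^d \dashrightarrow \bbP^{n+m}$ restricts to morphisms sending $X, Y$ to subvarieties $\pi_L(X), \pi_L(Y)$ of dimensions $n, m$ and degrees $\deg X, \deg Y$ (the projection is birational onto its image over the general point of the image). Because $n+m = \dim\bbP^{n+m}$, for general $L$ the intersection $\pi_L(X)\cap\pi_L(Y)$ is transverse and, by Bézout in the Chow ring of $\bbP^{n+m}$, consists of $\deg X\cdot\deg Y$ points. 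The key geometric observation is that $\pi_L(x)=\pi_L(y)$ holds exactly when the joining line $\langle x,y\rangle$ meets the centre $L$, and inside the fibre-plane $\langle L,\pi_L(x)\rangle$ that line meets $L$ in a single point of $J(X,Y)\cap L$; this yields a bijection $J(X,Y)\cap L \leftrightarrow \pi_L(X)\cap\pi_L(Y)$ and hence $\deg J(X,Y) = \deg X\cdot\deg Y$.

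The hard part will be upgrading the naive dimension inequality to genuine equalities with the correct multiplicities, i.e.\ the two genericity statements on which everything rests: that $\sigma$ (equivalently the projection $p_3$ from the incidence variety) is generically finite, and that through a general point of the join there passes a unique joining line, so that the projections $\pi_L|_X,\pi_L|_Y$ are birational onto their images and $\pi_L(X)\cap\pi_L(Y)$ is transverse. Both are consequences of $X\cap Y = \emptyset$ together with $\dim J < d$ (the relevant double-point loci then have strictly smaller dimension), and are precisely the content of the cited join results. Rather than reprove this transversality and genericity bookkeeping from scratch, I would discharge it by invoking \cite[Example 18.17]{Harris:AlgGeo}.
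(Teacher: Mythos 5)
Your proposal is correct and takes essentially the same route as the paper: the paper gives no independent proof of this lemma, stating it simply as a consequence of \cite[Example 18.17]{Harris:AlgGeo}, and your affine-cone plus generic-projection outline is in substance an unwinding of that same example, to which you also defer the genericity and finiteness bookkeeping. One small caveat: the formula $\dim J(X,Y) = \dim X + \dim Y - \dim (X\cap Y)$ you quote in passing is not valid for arbitrary intersecting $X,Y$ (secant varieties already violate it), but you only use it in the disjoint case, where it does give the right answer.
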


We prove an affine version of Lemma \ref{lemma: joins in general}, which will be useful to prove Theorem \ref{thm: joins} below. Regard the affine space $\bbC^d$ as an affine open subset of $\bbP^d$: for an affine variety $X \subseteq \bbC^d$, write $\bar{X} \subseteq \bbP^d$ for its Zariski closure and $X_\infty = \bar{X} \setminus X$ for its \emph{hyperplane cut at infinity}. Given two affine varieties $X,Y \subseteq \bbC^d$, we say that $X,Y$ do not intersect at infinity if $X_\infty \cap Y_\infty = \emptyset$. For two varieties $X,Y \subseteq \bbC^d$, write $\Sigma(X \times Y) = \bar{X + Y}$ for the Zariski closure of their Minkowski sum: this can be regarded as an affine version of the geometric join.

\begin{proposition}\label{prop: affine joins in general}
 Let $X,Y  \subseteq \bbC^d$ be irreducible affine varieties, not intersecting at infinity and such that $\dim X + \dim Y < d$. Then $\Sigma(X \times Y)$ is irreducible, $\dim \Sigma(X \times Y) = \dim X + \dim Y$ and $\deg \Sigma(X \times Y) = \deg(X) \deg(Y)$.
\end{proposition}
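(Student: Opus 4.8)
The plan is to prove the three assertions in order, deducing the degree statement from the projective Lemma~\ref{lemma: joins in general}. Throughout I write $k=\dim X+\dim Y$ and identify $\Sigma(X\times Y)=\overline{X+Y}$ with the closure of the image of the morphism $\Sigma$. \emph{Irreducibility} is immediate: over $\bbC$ the product $X\times Y$ is irreducible, the image of an irreducible variety under a morphism is irreducible, and so is its Zariski closure. The remaining two statements both reduce to understanding the addition map $\Sigma\colon X\times Y\to\overline{X+Y}$: the dimension claim is equivalent to this map being generically finite, and the degree claim will follow once one knows it is moreover birational.

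To control the map I would realize the Minkowski sum as a linear image of an honest join. Embed $\bbC^d\hookrightarrow\bbP^d$ via $a\mapsto[1:a]$, with closures $\bar X,\bar Y$ and sections at infinity $X_\infty=\bar X\cap\{x_0=0\}$ and $Y_\infty$. In $\bbP^{2d+1}$ with coordinates $[x_0:\cdots:x_d:y_0:\cdots:y_d]$, place $\bar X$ in the linear space $\{y=0\}$ and $\bar Y$ in the disjoint linear space $\{x=0\}$, and let $\hat J$ be their join. Because these two copies of $\bbP^d$ are disjoint, a general point $[\lambda\hat a:\mu\hat b]$ of $\hat J$ recovers $a\in\bar X$ and $b\in\bar Y$ uniquely; thus $\hat J$ is birational to $\bar X\times\bar Y\times\bbP^1$, and by \cite[Example~18.17]{Harris:AlgGeo} (the source of Lemma~\ref{lemma: joins in general}) it is irreducible of dimension $k+1$ and degree $\deg X\cdot\deg Y$. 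Now intersect with the hyperplane $H=\{x_0=y_0\}$; since $H$ corresponds to the slice $\lambda=\mu$ under the parametrization, $V=\hat J\cap H$ is irreducible, birational to $\bar X\times\bar Y$, of dimension $k$ and degree $\deg X\cdot\deg Y$. Let $\psi\colon\bbP^{2d+1}\dashrightarrow\bbP^d$ be the linear projection $[x_0:\cdots:y_d]\mapsto[x_0:x_1+y_1:\cdots:x_d+y_d]$. On the open locus $x_0=y_0\neq 0$ one has $V\cong X\times Y$ through $(a,b)\mapsto[1:a:1:b]$, and $\psi$ sends this point to $[1:a+b]$; hence $\overline{X+Y}=\psi(V)$ and $\psi|_V$ restricts to the addition map on the dense open $X\times Y\subseteq V$.

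The geometric input from the hypothesis at infinity is that $\psi$ is \emph{finite} on $V$. The center of $\psi$ is the linear space $\Lambda=\{x_0=0,\ x_i+y_i=0\ (1\le i\le d)\}$, and a point of $V\cap\Lambda$ would satisfy $x_0=y_0=0$ together with $x_i=-y_i$. Points of $\hat J$ with $x_0=y_0=0$ lie on the join of $X_\infty$ and $Y_\infty$, so their $x$- and $y$-blocks represent points $[v]\in X_\infty$ and $[w]\in Y_\infty$; the condition $x_i=-y_i$ forces $[v]=[w]$, i.e.\ a common point of $X_\infty$ and $Y_\infty$. Since $X_\infty\cap Y_\infty=\emptyset$ we get $V\cap\Lambda=\emptyset$, so $\psi|_V$ is a finite morphism. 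Consequently $\dim\overline{X+Y}=\dim V=\dim\hat J-1=k$, which settles the dimension claim; and because a finite linear projection multiplies degrees by the size of the generic fibre, $\deg X\cdot\deg Y=\deg V=\deg(\psi|_V)\cdot\deg\overline{X+Y}$. The degree statement is therefore equivalent to $\psi|_V$ being birational, i.e.\ to the addition map having degree one.

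\textbf{Main obstacle.} The crux is thus the generic injectivity of $X\times Y\to\overline{X+Y}$, and this is exactly where \emph{both} hypotheses are needed. The strict inequality $k<d$ is essential: if instead $k=d$, the map is dominant onto $\bbC^d$ and a generic point has $\deg X\cdot\deg Y$ preimages by the affine B\'ezout theorem, so degree one genuinely fails. To prove degree one under $k<d$ I would study the coincidence locus $I=\{(a,b,a',b')\in X\times Y\times X\times Y : a+b=a'+b'\}$. Finiteness of $\Sigma$ gives $\dim I=k$, the diagonal $\Delta\cong X\times Y$ is one $k$-dimensional component, and degree one is equivalent to $\Delta$ being the only component dominating $\overline{X+Y}$. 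On any hypothetical second such component the common difference $w=a-a'=b'-b$ is a nonzero element of $\overline{X-X}\cap\overline{Y-Y}$; the plan is to degenerate this family of coincidences to infinity, where it would produce a shared point of $X_\infty$ and $Y_\infty$, contradicting the hypothesis, while the strict inequality $k<d$ guarantees the degeneration is nontrivial, leaving room for the difference varieties to meet only in the expected, small-dimensional way. Making this precise — controlling the excess intersection of $\overline{X-X}$ and $\overline{Y-Y}$ at infinity — is the delicate point; the rest of the argument is formal.
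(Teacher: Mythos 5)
Your irreducibility and dimension arguments are essentially sound: realizing $\overline{X+Y}$ as the image of $V=\hat{J}\cap H$ under the linear projection $\psi$, and checking $V\cap\Lambda=\emptyset$ via $X_\infty\cap Y_\infty=\emptyset$, correctly yields finiteness and hence $\dim\overline{X+Y}=\dim X+\dim Y$. (A small caveat: the identification of $H$ with ``the slice $\lambda=\mu$'' is only valid where $a_0,b_0\neq0$, so to conclude that $V$ is irreducible you still need to exclude components supported on $\{x_0=y_0=0\}$; this works because such components would lie in $J(X_\infty,Y_\infty)$, whose dimension is at most $\dim X+\dim Y-1<\dim V$ --- the same count the paper makes.) The genuine gap is the degree claim. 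You correctly reduce it to generic injectivity of the addition map, declare this the main obstacle, and then only sketch a degeneration-to-infinity strategy without carrying it out. That sketch is not obviously repairable as stated: a nonzero common difference $w\in\overline{X-X}\cap\overline{Y-Y}$ does not by itself produce a common point of $X_\infty$ and $Y_\infty$ (both varieties could simply contain $0$ and $w$), so the contradiction you aim for requires a genuinely new argument. As written, you have only established $\deg(X)\deg(Y)=\deg(\psi|_V)\cdot\deg\overline{X+Y}$, i.e., an upper bound.

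The source of the difficulty is your choice of ambient space. Forming the join in $\bbP^{2d+1}$ with $\bar{X},\bar{Y}$ in disjoint linear subspaces makes $\deg\hat{J}=\deg X\cdot\deg Y$ hold unconditionally, so the hypothesis $\dim X+\dim Y<d$ never enters, and its entire force must be recovered through the birationality of the projection --- precisely the step you cannot complete. The paper instead re-embeds $X$ and $Y$ into $\bbP^{d+1}=\bbP(\bbC^d\oplus Z\oplus Z')$ via $x\mapsto(x,1,0)$ and $y\mapsto(y,0,1)$; the closures are disjoint by the no-intersection-at-infinity hypothesis, and now $\dim J(\bar{X},\bar{Y})=\dim X+\dim Y+1<d+1$, so Lemma \ref{lemma: joins in general} gives $\deg J(\bar{X},\bar{Y})=\deg X\cdot\deg Y$ directly. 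The inequality $\dim J<{}$(ambient dimension) in that lemma is exactly the condition guaranteeing that a general point of the join lies on a unique secant line; it is the packaged form of the injectivity you are missing. The variety $\Sigma(X\times Y)$ is then the affine chart $\{z=z'\neq0\}$ of the hyperplane section $J(\bar{X},\bar{Y})\cap\{z=z'\}$, which is irreducible (extra components would sit in $J(X_\infty,Y_\infty)$, of dimension at most $\dim J-2$) and inherits the degree of the join. If you want to salvage your route, the cleanest patch is to prove the birationality of $\psi|_V$ by identifying $V$ with this hyperplane section of the small join in $\bbP^{d+1}$, rather than attempting the coincidence-locus degeneration.
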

\begin{proof}
 The variety $\Sigma(X \times Y)$ is the closure of the image of the addition map $\Sigma : X \times Y \to \bbC^d$ defined by $\Sigma(x,y) = x+y$. Since $X,Y$ are irreducible, $\Sigma(X \times Y)$ is irreducible as well.
 
 Let $Z, Z'$ be two $1$-dimensional vector spaces and consider $\bbP^{d+1} = \bbP (\bbC^d \oplus Z \oplus Z')$ with homogeneous coordinates $x_1 \vvirg x_d, z, z'$. Reembed $X,Y$ in $\bbP^{d+1}$ as follows:
\[
\begin{array}{llcll}
  X &\to \bbP^{d+1} & \qquad \qquad &  Y &\to \bbP^{d+1} \\ 
  x &\mapsto (x,1,0)& \qquad \qquad &  y &\mapsto (y,0,1); 
\end{array}
\]
denote by $\bar{X},\bar{Y}$ the closures (in the Zariski topology of $\bbP^{d+1}$) of the two images.

Observe that $\bar{X},\bar{Y}$ are disjoint. Indeed, if $p \in \bar{X} \cap \bar{Y}$, then in coordinates one has $z(p) = z'(p) = 0$; hence $p$ belongs to the intersection $X_\infty \cap Y_\infty$ of the two hyperplane  cuts at infinity, which is empty by hypothesis. Therefore $\bar{X} \cap \bar{Y} = \emptyset$.
By Lemma \ref{lemma: joins in general},  $J(\bar{X},\bar{Y})$ is irreducible, with $\dim J(\bar{X},\bar{Y}) = \dim \bar{X} + \dim \bar{Y} + 1$. Moreover, since $\dim \bar{X} + \dim \bar{Y} + 1 < d+1$, we obtain $\deg J (\bar{X},\bar{Y}) = \deg(\bar{X}) \deg(\bar{Y})$.

Now, one can check explicitly in coordinates that
\[
 \Sigma(X \times Y) = J(\bar{X}, \bar{Y}) \cap \{ z = z'  \neq 0\};
\]
in other words, $ \Sigma(X \times Y)$ is an affine chart of the hyperplane section $\{z = z'\}$ of $J(\bar{X}, \bar{Y})$. 
Note that $J(\bar{X}, \bar{Y}) \cap  \{ z = z'\}$ is irreducible. To see this, observe that in the affine chart $\{z \neq 0\}$ it coincides with $\Sigma(X \times Y) $ which is irreducible; therefore other irreducible components would be supported at $z = z' = 0$. However, there is no line $L = \langle x , y \rangle$ with $x \in \bar{X}$ and $y \in \bar{Y}$ such that $L \cap \{ z = z' = 0\} \neq \emptyset$, unless $x \in X_\infty$ or $y \in Y_\infty$. This shows that $J(\bar{X},\bar{Y}) \cap \{ z = z' = 0\} = J(X_\infty , Y_\infty)$; since 
\[
\dim J(X_\infty,Y_\infty) \leq  \dim X_\infty + \dim Y_\infty +1 = \dim J(\bar{X},\bar{Y}) - 2,
\]
$J(X_\infty,Y_\infty)$ is not an irreducible component of a hyperplane section of $J(\bar{X},\bar{Y})$. This proves that $J(\bar{X},\bar{Y})\cap \{z = z'\}$ is irreducible, hence its affine chart on $\{ z = z' \neq 0\}$ is irreducible as well.
We conclude
\begin{align*}
\dim \Sigma(X \times Y) &= \dim J (\bar{X},\bar{Y}) - 1 = \dim X + \dim Y ,\\
\deg \Sigma(X \times Y) &= \deg J (\bar{X},\bar{Y}) = \deg(X) \deg(Y).
\end{align*}
\end{proof}

Applying Proposition \ref{prop: affine joins in general} iteratively to the boundaries of the discs defining the discotope, we obtain the following result.
\begin{theorem}\label{thm: joins}
Let $\bfN = (0,N_2 \vvirg N_{d})\subseteq \bbN^d$ be such that $\sum_{m=1}^{d} (m-1) N_m  \leq d-1$. Let $\calD$ be a generic discotope in $\bbR^d$ of type $\bfN$. Then $\calS$ is irreducible of degree $2^N$, where $N = \sum N_m$.
\end{theorem}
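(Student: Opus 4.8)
The plan is to show that $\calS$ coincides with the Zariski closure $\overline{\calD^\partial}$ and to compute the latter by iterating Proposition \ref{prop: affine joins in general}. Write $m_j = \dim D_j$, let $\pi_j$ be the orthogonal projection onto $\langle\calB_j\rangle$, and set $X_j = \partial_a D_j$. Because $\bfN = (0, N_2 \vvirg N_d)$ has $N_1 = 0$, every $m_j \ge 2$, so each $X_j$ is the smooth affine quadric $\{q_{\calB_j} = 1\}$ inside the $m_j$-dimensional space $\langle \calB_j\rangle$; it is irreducible of dimension $m_j - 1$ and degree $2$. Since each real sphere $\partial D_j$ is Zariski dense in $X_j$ and $\Sigma$ is a morphism, one has $\overline{\calD^\partial} = \overline{\Sigma(\textprod_j \partial D_j)} = \Sigma(\textprod_j X_j)$. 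Setting $Y_k = \Sigma(X_1 \times \cdots \times X_k) = \overline{X_1 + \cdots + X_k}$, the first goal is that $\overline{\calD^\partial} = Y_N$ is irreducible of degree $2^N$.

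I would prove by induction that $Y_k$ is irreducible of dimension $\sum_{i \le k}(m_i - 1)$ and degree $2^k$, the base case $Y_1 = X_1$ being clear. The inductive step applies Proposition \ref{prop: affine joins in general} to the irreducible varieties $Y_{k-1}$ and $X_k$. Its dimension hypothesis reads $\dim Y_{k-1} + \dim X_k = \sum_{i \le k}(m_i - 1) < d$, which holds since the total $\sum_m (m-1) N_m \le d - 1$ dominates every partial sum. Its remaining hypothesis is that $Y_{k-1}$ and $X_k$ do not meet at infinity. Here $(X_k)_\infty \subseteq \langle \calB_k\rangle_\infty$, which for a generic discotope is a generic linear subspace of the hyperplane at infinity $\bbP^{d-1}$ of projective dimension $m_k - 1$; on the other hand $(Y_{k-1})_\infty$ is a fixed subvariety of dimension $\dim Y_{k-1} - 1 = \sum_{i < k}(m_i - 1) - 1$. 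Since $(m_k - 1) + \big(\sum_{i<k}(m_i-1) - 1\big) = \sum_{i \le k}(m_i-1) - 1 \le d - 2 < d - 1$, a generic linear subspace of dimension $m_k - 1$ is disjoint from $(Y_{k-1})_\infty$, whence $(X_k)_\infty \cap (Y_{k-1})_\infty = \emptyset$. The Proposition then gives $Y_k$ irreducible with $\dim Y_k = \sum_{i\le k}(m_i - 1)$ and $\deg Y_k = 2 \deg Y_{k-1} = 2^k$; at $k = N$ this yields $\overline{\calD^\partial} = Y_N$ irreducible of degree $2^N$.

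It remains to prove $\calS = Y_N$. As $\calS = \overline{\calD^\partial \cap \partial\calD} \subseteq \overline{\calD^\partial} = Y_N$ with $Y_N$ irreducible, it suffices to show $\dim \calS = \dim Y_N$, and I would do this by producing enough exposed points. For $u \in \calU$ the face $\calD^u$ is the single point $p(u) = \sum_j \xi_j(u)$ with $\xi_j(u) \in \partial D_j$, so $p(u) \in \calD^\partial \cap \partial\calD$. Each $\xi_j(u)$ depends only on the direction of $\pi_j(u)$, through the inverse Gauss map of the ellipsoid $\partial D_j$, which is a diffeomorphism. Thus, up to these diffeomorphisms, the assignment $u \mapsto (\xi_j(u))_j$ factors through the multiprojection $[u] \mapsto ([\pi_j(u)])_j$ from a dense open subset of $\bbP^{d-1}$ to $\textprod_j \bbP(\langle\calB_j\rangle)$, whose target has dimension $\sum_m (m-1) N_m \le d - 1$. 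A computation of its differential shows that for generic subspaces this map is dominant; hence $\{(\xi_j(u))_j : u \in \calU\}$ is dense in $\textprod_j \partial D_j$, and applying $\Sigma$ shows the exposed locus is dense in $\calD^\partial$, so has real dimension $\sum_m(m-1)N_m$ and is Zariski dense in $Y_N$. This forces $\calS = Y_N$ and completes the proof.

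The step I expect to be most delicate is the control at infinity inside the induction: one must verify that the cut $(Y_{k-1})_\infty$ has the expected dimension $\dim Y_{k-1} - 1$ and that the generic position of $\langle\calB_k\rangle_\infty$ relative to it is an open, nonempty (hence generic) condition on the bases, so that all the non-intersection hypotheses of Proposition \ref{prop: affine joins in general} hold simultaneously for the generic discotope. The dominance of the multiprojection map in the last step is of the same flavor and, once reduced to the surjectivity of a differential at a single explicit configuration, is comparatively routine.
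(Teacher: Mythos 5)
Your proposal is correct in substance and follows the same skeleton as the paper: identify $\calS$ with the iterated affine join $\overline{\partial_a D_1 + \cdots + \partial_a D_N}$ and apply Proposition \ref{prop: affine joins in general} inductively. The two places where you diverge are both sub-steps of that skeleton. For the non-intersection at infinity, the paper notes that the condition is Zariski open in the parameters and then exhibits one explicit chain of embeddings (controlling $\langle D_1 \vvirg D_{n-1}\rangle \cap \langle D_n\rangle$) for which it holds; you instead argue directly that a generic linear subspace $\langle\calB_k\rangle_\infty$ of projective dimension $m_k-1$ misses the fixed variety $(Y_{k-1})_\infty$ of dimension $\sum_{i<k}(m_i-1)-1$ because the dimensions sum to less than $d-1$. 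Your dimension count is cleaner and equally valid, provided you note (as you do) that $\calB_k$ is generic relative to $\calB_1\vvirg\calB_{k-1}$, which determine $(Y_{k-1})_\infty$. For the identification $\calS = Y_N$, the paper's argument is more elementary: for \emph{any} $\bfxi \in \prod\partial D_i$ the tangent spaces $T_{\xi_i}\partial D_i$ span a proper subspace of $\bbR^d$, so after flipping signs of the $\xi_i$ one gets a supporting hyperplane and hence a boundary point; this puts a Euclidean-open (hence Zariski-dense) subset of $\prod\partial D_i$ into $\Sigma^{-1}(\calS)$ with no rank computation. Your route via exposed points and the multiprojection $[u]\mapsto([\pi_j(u)])_j$ also works — dominance of that map is in fact automatic, since its fibers are projectivizations of intersections of linear subspaces of total codimension $\sum(m_j-1)\leq d-1 < d$, hence nonempty — but one claim is overstated: the set $\{(\xi_j(u))_j : u\in\calU\}$ is generally \emph{not} dense in $\prod_j\partial D_j$ (already for two $2$-discs in $\bbR^3$ the sign constraints exclude roughly half of $S^1\times S^1$, exactly the sign-flip phenomenon the paper's argument handles explicitly). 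What saves you is that you only need the exposed locus to have real dimension $\sum(m_j-1)$, which follows from the generic rank of the differential and suffices for Zariski density in the irreducible $Y_N$. As a bonus, your version shows that in this regime $\calS$ is the Zariski closure of the exposed points, which the paper does not record for this case.
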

\begin{proof}
Let $D_1 \vvirg D_N$ be the discs defining the discotope and let $d_i = \dim \langle D_i \rangle$; in particular $\dim \partial_a D_i = d_i-1$. For $n = 1 \vvirg N$, let 
\[
X_n = \Sigma \left( \prod_{i=1}^n \partial_a D_i\right). 
\]

First notice $X_N = \calS$. The inclusion $\calS \subseteq X_N$ is clear by the definition of $\calS$. For the other inclusion, we show that there is a (real) Euclidean open subset $U \subseteq \prod \partial D_i$ such that $\Sigma(U) \subseteq \calS$; passing to the Zariski closure we obtain the equality. Let $\bfxi = (\xi_1 \vvirg \xi_N) \in \prod \partial D_i$, and for every $i$ write $T_{\xi_i} \partial D_i$ for the (real) tangent space at $\xi_i$; note $\dim T_{\xi_i} \partial  D_i = d_i -1$, hence $\langle T_{\xi_i} \partial D_i : i = 1 \vvirg N\rangle$ is a proper linear subspace of $\bbR^d$. Let $u \in \bbR^d$ be a unit vector such that the hyperplane $u^\perp$ contains $\langle T_{\xi_i} \partial D_i : i = 1 \vvirg N\rangle$. Up to replacing $\xi_i$ with $-\xi_i$, assume $\lprod u,\xi_i\rprod \geq 0$. Let $p = \Sigma(\bfxi) = \xi_1 + \cdots + \xi_N$. By definition $p \in X_N$; moreover $p \in \partial \calD$, because 
\[
 \lprod u , p \rprod = \textsum _1^N \lprod u , \xi_i\rprod \geq \textsum_1^N \lprod u , \tilde{\xi}_i \rprod = \lprod u , \tilde{p}\rprod
\]
for any other point $\tilde{p} = \tilde{\xi}_1 + \cdots + \tilde{\xi}_N$ of $\calD$; this shows that $p$ belongs to the face of $\calD$ exposed by $u$, and in particular to the boundary of $\calD$. Therefore $p \in \calS$. We conclude $X_N = \calS$.

Next, we show that for every $n$, $X_{n-1}$ and $\partial_a D_n$ have no intersection at infinity, for a generic choice of the discs. Having empty intersection at infinity is an open condition on the parameter space of the embeddings of the discs; hence, in order to show that there is no intersection at infinity for a generic choice of embeddings, it suffices to exhibit a choice for which this property is verified. 

By assumption, $\sum_1^N (d_i - 1) \leq d-1$; let $\delta = \max\{ 0,\big(\sum_1^N d_i\big) -d\}$ and notice $\delta \leq N-1$. Then, one can choose the embeddings of $D_1 \vvirg D_N$ so that the following properties hold:
\begin{itemize}
\item[\scalebox{.8}{$\blacktriangleright$}] if $n = 1 \vvirg \delta+1$, then the dimension of $\langle D_1 \vvirg D_{n-1} \rangle \cap \langle D_n \rangle$ coincides with $\dim (\langle D_{n-1} \rangle \cap \langle D_n \rangle)$ and it is exactly $1$,
\item[\scalebox{.8}{$\blacktriangleright$}] if $n = \delta+2 \vvirg N$, then $\langle D_1 \vvirg D_{n-1} \rangle \cap  \langle D_{n} \rangle= 0$.
\end{itemize}
With this choice of embeddings, we show that for every $n$, $X_{n-1}$ and $\partial_a D_n$ have no intersection at infinity. Write $X_{n-1 ,\infty}$ and $\partial_a D_{n ,\infty}$ for the two components at infinity; they are subvarieties of $\bbP (\langle D_1 \vvirg D_{n}\rangle)$. Their intersection is a subvariety of $\bbP (\langle D_1 \vvirg D_{n-1} \rangle )  \cap \bbP (\langle D_{n} \rangle)= \bbP \left( \langle D_{n-1} \rangle\cap \langle D_{n} \rangle \right)$. If $n \leq \delta+1$,   $\bbP \left( \langle D_{n-1} \rangle\cap \langle D_{n} \rangle \right)$ is a single point, and such point does not belong to $\partial_a D_{n, \infty}$; if $n \geq \delta +2$, then $\bbP \left( \langle D_{n-1} \rangle\cap \langle D_{n} \rangle \right)$ is empty. This proves the claim.

To conclude, we use induction on $n$ to show that $\dim X_n = \sum_{i=1}^n (d_i-1)$ and $\deg X_n = 2^n$. The statement is clear for $n =1$. Assume $n\geq 2$. We have $X_n = \Sigma(X_{n-1} \times \partial_a D_n)$. Since $X_{n-1}$ and $\partial_a D_n$ do not intersect at infinity, Proposition \ref{prop: affine joins in general} applies, hence 
\begin{align*}
 \dim X_n &= \dim X_{n-1} + \dim \partial_a D_n = \sum_{i=1}^{n-1} (d_i-1) + (d_n - 1) = \sum_{i=1}^n (d_i-1), \\
 \deg X_n &= \deg X_{n-1} \cdot \deg \partial_a D_n = 2^{n-1} \cdot 2  = 2^n. 
\end{align*}
For $n = N$, we obtain the desired result for $\calS$.
\end{proof}

\begin{example}
Consider the following discs in $\bbR^6$:
\begin{align}
D_1 &= \{(x_1\vvirg x_6) : x_3 = x_4 = x_5 = x_6 = 0, x_1^2+x_2^2\leq 1\},\\
D_2 &= \{(x_1\vvirg x_6) : x_1 = x_2 = x_5 = x_6 = 0, x_3^2+x_4^2\leq 1\},\\
D_3 &= \{(x_1\vvirg x_6) : x_1-x_3 = x_2 = x_4 = 0, (x_1+x_3)^2+x_5^2+x_6^2\leq 1\}.
\end{align}
Let $\calD = D_1 + D_2 + D_3$. This discotope is full dimensional but the condition \eqref{eq:condition_S} holds with reverse inequality: indeed, $1+1+2<5$. Thus, by Theorem \ref{thm: joins}, $\calS = \overline{\partial_a D_1+\partial_a D_2+\partial_a D_3}$ is irreducible, of codimension $2$ and degree $8$. Its ideal is generated by one cubic and three quartic polynomials:
\[
\begin{smallmatrix}
4 x_1^2 x_3+4 x_2^2 x_3-4 x_1 x_3^2-4 x_1 x_4^2+x_1 x_5^2-x_3 x_5^2+x_1 x_6^2-x_3 x_6^2+3 x_1-3 x_3, \\
~\\
~\\
16 x_3^4+32 x_3^2 x_4^2+16 x_4^4+8 x_3^2 x_5^2-8 x_4^2 x_5^2+x_5^4+8 x_3^2 x_6^2-8 x_4^2 x_6^2+2 x_5^2 x_6^2+x_6^4-40 x_3^2-24 x_4^2+6 x_5^2+6 x_6^2+9 ,\\
~ \\
~\\
16 x_1^4+32 x_1^2 x_2^2+16 x_2^4+8 x_1^2 x_5^2-8 x_2^2 x_5^2+x_5^4+8 x_1^2 x_6^2-8 x_2^2 x_6^2+2 x_5^2 x_6^2+x_6^4-40 x_1^2-24 x_2^2+6 x_5^2+6 x_6^2+9 ,\\
~\\
~\\
16 x_1^2 x_3^2+16 x_2^2 x_3^2+16 x_1^2 x_4^2+16 x_2^2 x_4^2  -4 x_1^2 x_5^2 -4 x_2^2 x_5^2-4 x_3^2 x_5^2-4 x_4^2 x_5^2-4 x_1^2  x_6^2-4 x_2^2 x_6^2-4 x_3^2 x_6^2-4 x_4^2 x_6^2+ \\
x_5^4 +2 x_5^2  x_6^2 +x_6^4+ 16 x_1 x_3 x_5^2+16 x_1 x_3 x_6^2-12 x_1^2-12 x_2^2-16 x_1 x_3-12 x_3^2-12 x_4^2+6x_5^2+6x_6^2+9 .
\end{smallmatrix}
 \]
\end{example}

\section{Exposed points of the discotope}\label{sec:exposed_points}
In the rest of the paper, we assume that \eqref{eq:condition_S} is satisfied. Recall that all extreme points of $\calD$, hence all its exposed points, are contained in $\calS$. In this section, we prove that they form a full dimensional subset of the boundary of the discotope; in particular, at least one irreducible component of $\calS$ of dimension $d-1$ is the Zariski closure of a subset of exposed points. Further, we prove that exposed points are generically exposed by a unique vector in $S^{d-1}$. First, we give a general result which will be useful in the following.

\begin{lemma}\label{lem:Minkowski_sum}
Let $K_1 \vvirg K_N$ be convex bodies in $\bbR^d$. Consider a point $p = p_1 + \ldots + p_N \in \partial K$ where $K$ is the Minkowski sum of the $K_i$'s, and assume that $p_i$ is a smooth point of $\partial K_i$ for every $i=1\vvirg N$. Fix $u\in S^{d-1}$. Then $T_{p_i} \partial K_i \subseteq u^\perp$ for every $i$ if and only if $p$ belongs to the face of $K$ exposed by $u$.
\end{lemma}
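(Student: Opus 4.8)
The plan is to reduce the statement to the individual summands using the additivity of support functions, and then to translate the condition ``$p_i$ lies in the face of $K_i$ exposed by $u$'' into the tangent-space condition by exploiting smoothness. This mirrors the argument already used in Section~\ref{sec: faces}.

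First I would record the additivity $h_K(u) = \sum_i h_{K_i}(u)$, valid for Minkowski sums, and prove the elementary reduction: for the fixed decomposition $p = p_1 + \cdots + p_N$, one has $p \in \calD^u$ (the face of $K$ exposed by $u$) if and only if $p_i$ lies in the face of $K_i$ exposed by $u$ for every $i$. Indeed,
\[
\lprod u, p\rprod = \textsum_i \lprod u, p_i\rprod \leq \textsum_i h_{K_i}(u) = h_K(u),
\]
and since each term satisfies $\lprod u, p_i \rprod \le h_{K_i}(u)$, equality in the total holds precisely when it holds termwise. This isolates the problem to a single convex body.

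Next I would use smoothness. At a smooth boundary point $p_i$ the body $K_i$ has a unique supporting hyperplane, namely the affine tangent hyperplane $p_i + T_{p_i}\partial K_i$, and a unique outer unit normal $\nu_i$ with $T_{p_i}\partial K_i = \nu_i^\perp$ (as linear subspaces through the origin). The condition $\lprod u, p_i\rprod = h_{K_i}(u)$ says exactly that $\{x : \lprod u, x\rprod = h_{K_i}(u)\}$ is a supporting hyperplane of $K_i$ at $p_i$; by uniqueness this forces it to be the tangent hyperplane, so $T_{p_i}\partial K_i = u^\perp$. Conversely, if $T_{p_i}\partial K_i \subseteq u^\perp$ then, both being hyperplanes through the origin, they coincide, and the tangent hyperplane $p_i + u^\perp$ supports $K_i$ at $p_i$, whence $p_i$ lies in the face exposed by $u$. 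Combining with the reduction step yields the asserted equivalence.

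The one delicate point---the main obstacle---is the orientation: the containment $T_{p_i}\partial K_i \subseteq u^\perp$ only determines $u$ up to sign, since $u^\perp = (-u)^\perp$, whereas membership in the face exposed by $u$ (rather than by $-u$) fixes the sign. I would handle this by observing that smoothness makes the normal cone at each $p_i$ a single ray $\bbR_{\geq 0}\,\nu_i$, so the hypothesis $p \in \partial K$ forces all the $\nu_i$ to agree: $p$ admits a supporting direction $w \neq 0$ exactly when $w$ lies in $\bigcap_i \bbR_{\geq 0}\,\nu_i$, which is nonzero only if the $\nu_i$ share a common value $\nu$, and it is precisely this common outward direction $\nu$ that one checks equals $u$ in the equivalence.
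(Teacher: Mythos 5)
Your reduction to the individual summands via additivity of the support function is correct and is exactly the first half of the paper's own argument: $p\in K^u$ if and only if $\lprod u,p_i\rprod = h_{K_i}(u)$ for every $i$. The genuine gap is in the second half, where you convert $\lprod u,p_i\rprod = h_{K_i}(u)$ into the tangent condition. Every step there presupposes that $K_i$ is full-dimensional: you use that $p_i+T_{p_i}\partial K_i$ is the \emph{unique} supporting hyperplane, that $T_{p_i}\partial K_i$ and $u^\perp$ are ``both hyperplanes through the origin, hence coincide,'' and that the normal cone at $p_i$ is a single ray $\bbR_{\geq 0}\,\nu_i$. But the lemma is applied to generalized discs with $\dim K_i<d$, where $\partial K_i$ is the \emph{relative} boundary and $T_{p_i}\partial K_i$ has dimension $\dim K_i-1<d-1$. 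There the supporting hyperplane at $p_i$ is not unique, the normal cone is $\langle K_i\rangle^\perp+\bbR_{\geq 0}\,\nu_i$ rather than a ray, and $\bigcap_i \bbR_{\geq 0}\,\nu_i$ is not the relevant intersection. The paper's proof deals with precisely this point through the dichotomy ``either $u\perp\langle D_i\rangle$ (so $K_i^u=K_i$ and the containment is automatic) or $p_i$ is exposed by $u$ within $\langle D_i\rangle$''; your argument has no counterpart for the first alternative, and several of its assertions are false in that case.

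On the ``delicate point'' you flag: identifying it is right, but the proposed resolution does not close it. The hypothesis $T_{p_i}\partial K_i\subseteq u^\perp$ only places $u$ in the linear space $\bigl(\sum_i T_{p_i}\partial K_i\bigr)^\perp$, which is invariant under $u\mapsto -u$, so nothing distinguishes the outer from the inner normal: for $N=1$, $K_1$ the unit disc, $p=(1,0)$ and $u=(-1,0)$, the tangent condition holds while $p\notin K^u$. What $p\in\partial K$ actually yields (and what the paper uses downstream) is that \emph{some} unit vector in that orthogonal complement exposes $p$, not that the given $u$ does; your claim that ``one checks'' the common outward direction equals $u$ is exactly the step that cannot be carried out. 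The paper's own phrasing is admittedly loose on the same point, but your write-up asserts it is handled when it is not.
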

\begin{proof}
Assume $T_{p_i} \partial K_i \subseteq u^\perp$ for every $i$ for some $u\in S^{d-1}$. Then one of these vectors $u$ satisfies $p_i \in K_i^u$ for every $i$. As a consequence, $p\in K^u$.
Conversely, let $p\in K^u$. Therefore, $h_K(u) = \lprod p,u\rprod = \sum_{i=1}^N \lprod p_i ,u\rprod$ and for every $i$ and every $\tilde{p}_i \in D_i$, 
\begin{equation}
\lprod p_i ,u\rprod \geq \lprod \tilde{p}_i ,u\rprod.
\end{equation}
Hence $h_{D_i} (u) = \lprod p_i , u \rprod$. There are two possible situations: either $u \perp \langle D_i \rangle$, or $p_i$ is exposed by $u$. In both cases it is clear that $T_{p_i} \partial K_i \subseteq u^\perp$.
\end{proof}

\begin{proposition}
Let $\Sigma : \prod \partial_a D_i \to \bbC^d$ be the restriction of the addition map to the algebraic boundaries of $N$ generic discs. Assume that \eqref{eq:condition_S} holds with strict inequality. Then 
\begin{equation}
 \Sigma^{-1} (\calS) \subseteq \crit \left( \Sigma\right).
\end{equation}
Here $\crit \left( \Sigma\right)$ denotes the critical locus, that is the set of points $\bfxi \in \prod \partial_a D_i$ where the differential $d_{\bfxi}\Sigma$ does not have full rank.
\end{proposition}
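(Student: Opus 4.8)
The plan is to reduce the statement to a purely linear condition on the tangent spaces of the $\partial_a D_i$, and then to transport the real convex-geometric content of Lemma \ref{lem:Minkowski_sum} to the complex preimage. First I would compute the differential. Since $\prod_i \partial_a D_i$ is a product and $\Sigma$ is the (linear) addition map, at a point $\bfxi = (\xi_1 \vvirg \xi_N)$ one has $T_{\bfxi}\big(\prod_i \partial_a D_i\big) = \bigoplus_i T_{\xi_i}\partial_a D_i$ and $d_{\bfxi}\Sigma(\bfv_1 \vvirg \bfv_N) = \sum_i \bfv_i$, so the image of $d_{\bfxi}\Sigma$ is the span $\sum_i T_{\xi_i}\partial_a D_i \subseteq \bbC^d$. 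Consequently
\[
\bfxi \in \crit(\Sigma) \iff \textstyle\sum_i T_{\xi_i}\partial_a D_i \neq \bbC^d \iff \exists\, u \neq 0 \text{ with } T_{\xi_i}\partial_a D_i \subseteq u^\perp \text{ for all } i,
\]
where $u^\perp$ is taken for the complex-bilinear extension of $\lprod \cdot , \cdot \rprod$. Thus the goal becomes: whenever $\Sigma(\bfxi) \in \calS$, the tangent spaces $T_{\xi_i}\partial_a D_i$ admit a common orthogonal vector.

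Next I would dispose of the real boundary decompositions, which is exactly the situation of Lemma \ref{lem:Minkowski_sum}. If $\bfxi \in \prod_i \partial D_i$ is real with $p = \Sigma(\bfxi) \in \partial\calD$, then $p$ lies in the face $\calD^u$ exposed by some $u \in S^{d-1}$; since $h_\calD(u) = \sum_i h_{D_i}(u)$ and each $\xi_i \in D_i$, the equality $\lprod p, u\rprod = h_\calD(u)$ forces $\lprod \xi_i, u\rprod = h_{D_i}(u)$, so $\xi_i \in D_i^u$ for every $i$. As $\partial D_i$ is a smooth quadric, Lemma \ref{lem:Minkowski_sum} yields $T_{\xi_i}\partial D_i \subseteq u^\perp$ for all $i$; complexifying, $T_{\xi_i}\partial_a D_i \subseteq u^\perp_{\bbC}$, and by the criterion above $\bfxi \in \crit(\Sigma)$. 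Hence every real decomposition of a boundary point is critical.

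Finally I would bridge from these real points to all of $\Sigma^{-1}(\calS)$. Both $\Sigma^{-1}(\calS)$ and $\crit(\Sigma)$ are Zariski closed, so it suffices to exhibit a general point of each irreducible component $W$ of $\Sigma^{-1}(\calS)$ inside $\crit(\Sigma)$. Since $\calD^\partial \cap \partial\calD$ is Zariski dense in $\calS$ by definition and is swept out by the real decompositions treated above, the plan is to argue, using the genericity of the discs and the strict inequality in \eqref{eq:condition_S}, that these critical real decompositions are Zariski dense in $\Sigma^{-1}(\calS)$, so that the general point of $W$ is a limit of them and therefore lies in the closed set $\crit(\Sigma)$. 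I expect this density to be the main obstacle: one must preclude components of $\Sigma^{-1}(\calS)$ made of genuinely complex decompositions that could a priori be regular. The natural tools are generic smoothness (in characteristic zero) together with an incidence/tangency description of the fibers — reinterpreting $\Sigma^{-1}(p)$ as an intersection of the translated quadrics $\partial_a D_i$ and tracking how this intersection acquires a common supporting hyperplane when $p \in \calS$ — combined with the bookkeeping $\dim \crit(\Sigma) = d-1$, which follows from parametrizing the common orthogonal direction by $u \in \bbP^{d-1}$ and noting that the fibre of this parametrization is finite.
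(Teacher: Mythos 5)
Your first two steps coincide with the paper's own proof: the image of $d_{\bfxi}\Sigma$ is $T_{\xi_1}\partial_a D_1 + \cdots + T_{\xi_N}\partial_a D_N$, and for a real tuple $\bfxi\in\prod\partial D_i$ whose sum lies on $\partial\calD$, Lemma \ref{lem:Minkowski_sum} places every $T_{\xi_i}\partial D_i$ inside $u^\perp$ for an exposing direction $u$, so $d_{\bfxi}\Sigma$ is not surjective and $\bfxi$ is critical. Up to this point your argument is complete and is exactly the argument in the paper.

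The step you leave as a plan --- Zariski density of these real critical decompositions in the full set-theoretic preimage $\Sigma^{-1}(\calS)$ --- is precisely the step the paper does not carry out either: its proof consists of the two observations above followed by the words ``by density'' and ``passing to the Zariski closure''. So you have not missed an idea that the paper supplies; you have correctly located the weak point. Moreover, your worry about regular, genuinely complex decompositions is justified, and the density you hope for in fact fails. Already for $d=2$ and two generic ellipses (where the strict inequality in \eqref{eq:condition_S} holds), the fibre of $\Sigma$ over a generic $p\in\partial\calD$ is the intersection of the two conics $\partial_a D_1$ and $p-\partial_a D_2$: by B\'ezout it consists of the real point of tangency (the critical decomposition, counted with multiplicity two) together with two further intersection points, generically a conjugate pair of transverse intersections; transversality means exactly that the two tangent lines span $\bbC^2$, so these points lie in $\Sigma^{-1}(\calS)\setminus\crit(\Sigma)$. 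Hence the literal containment cannot be obtained by a density argument, and what both your write-up and the paper's actually establish is the weaker (and sufficient for the sequel) statement that the Zariski closure of the set of real decompositions of points of $\partial\calD$ is contained in the Zariski-closed set $\crit(\Sigma)$, whence $\calS\subseteq\overline{\Sigma(\crit(\Sigma))}$. The productive fix is to prove and use that statement rather than to pursue the density of real points in $\Sigma^{-1}(\calS)$.
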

\begin{proof}
Since \eqref{eq:condition_S} holds with strict inequality, for a generic $\bfxi \in \prod \partial_a D_i$ the differential $d_{\bfxi}\Sigma$ is surjective.
By density, it is enough to check that $d_{\bfxi}\Sigma$ is not surjective at the real points of $\Sigma^{-1} (\calS)$. For every $\bfxi\in \prod \partial D_i$, the image of the differential $d_{\bfxi} \Sigma$ is the sum $T_{\xi_1} \partial D_1 + \cdots + T_{\xi_N} \partial D_N$. If $\Sigma (\bfxi)$ belongs to the face $\calD^u$, then by Lemma \ref{lem:Minkowski_sum} $T_{\xi_i} \partial D_i \subseteq u^\perp$ for every $i$. In particular the differential is not surjective, hence $\bfxi$ is a critical point of $\Sigma$. Passing to the Zariski closure, we obtain $ \Sigma^{-1} (\calS) \subseteq \crit \left( \Sigma\right)$.
\end{proof}

The next result identifies a region of $\partial \calD$ of points exposed by a unique vector of $S^{d-1}$. 

\begin{lemma}\label{lem:omega}
Let $\calD$ be a generic discotope such that condition \eqref{eq:condition_S} is satisfied. Let $p \in \calD^{\partial} \cap \partial \calD$. The following are equivalent:
\begin{itemize}
\item there exists a unique $u \in S^{d-1}$ such that $p \in \calD^u$; 
\item $p = \textsum_{i=1}^N \xi_i$ for some $\xi_i \in \partial D_i$ such that $\codim \left( \textsum_{i = 1}^N T_{\xi_i} \partial D_i \right) = 1$.
\end{itemize}
Let $\Omega$ be the set of points that satisfy either (hence both) these conditions; then $\Omega$ is non-empty and Euclidean open in $\calD^{\partial} \cap \partial \calD$.
\end{lemma}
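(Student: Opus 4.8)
The plan is to work throughout with the outer normal cone $N(\calD,p)=\{u\in\bbR^d:\lprod u,p\rprod = h_\calD(u)\}$, whose intersection with $S^{d-1}$ is precisely the set of $u$ exposing $p$, so that the first condition becomes the assertion that $N(\calD,p)$ is a single ray. Fix any decomposition $p=\textsum_i\xi_i$ with $\xi_i\in\partial D_i$, which exists because $p\in\calD^{\partial}$, and set $L=\textsum_i T_{\xi_i}\partial D_i$. By Lemma \ref{lem:Minkowski_sum}, a vector $u$ exposes $p$ only if $T_{\xi_i}\partial D_i\subseteq u^\perp$ for all $i$, so $N(\calD,p)\subseteq L^\perp$; writing $\nu_i$ for the outer unit normal to $\partial D_i$ at $\xi_i$ inside $\langle D_i\rangle$, a support-function computation refines this to $N(\calD,p)=\{u\in L^\perp:\lprod u,\nu_i\rprod\ge 0 \text{ for all } i\}$.

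For the implication from the codimension condition to uniqueness, suppose $\codim L=1$. Then $L^\perp$ is a line and $N(\calD,p)$ is a subcone of it; it is nonzero since $p\in\partial\calD$ and contains no line since $\calD$ is full-dimensional, hence it is a single ray and $p$ is exposed by a unique $u$. For the converse I would argue by perturbation: let $u_0$ span $N(\calD,p)$ and suppose $\codim L\ge 2$. If $\lprod u_0,\nu_i\rprod>0$ for every $i$, then $u_0+\eps w\in N(\calD,p)$ for all $w\in L^\perp$ and all small $\eps$, producing a positive-dimensional family of exposing directions and contradicting uniqueness; thus $\codim L=1$. The delicate case, which I regard as the main obstacle, is when $u_0\perp\langle D_i\rangle$ for some $i$, forcing $\lprod u_0,\nu_i\rprod=0$: then $\calD^{u_0}$ contains a translate of $D_i$ and $p$ lies on a positive-dimensional face. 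I would rule this out using the facial description of Section \ref{sec: faces}: a point of $\calD^{\partial}\cap\partial\calD$ lying in the relative interior of such a face is not in $\calD^{\partial}$, while on the relative boundary the normal cone is not a ray, so uniqueness together with $p\in\calD^{\partial}$ forces $u_0\not\perp\langle D_i\rangle$ for every $i$.

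To show $\Omega\neq\emptyset$, I would choose a generic unit vector $u_0$ and, for each $i$, a point $\xi_i\in\partial D_i$ whose outer normal $\nu_i$ is the normalized projection of $u_0$ onto $\langle D_i\rangle$ (nonzero for generic $u_0$), flipping signs so that $\lprod u_0,\xi_i\rprod\ge 0$. Then every $T_{\xi_i}\partial D_i$ lies in $u_0^\perp$, so $L\subseteq u_0^\perp$; since \eqref{eq:condition_S} gives $\textsum_i\dim T_{\xi_i}\partial D_i=\textsum_m(m-1)N_m\ge d-1=\dim u_0^\perp$, genericity of the discs makes these tangent spaces fill $u_0^\perp$, whence $\codim L=1$. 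Exactly as in the proof of Theorem \ref{thm: joins}, $u_0$ then exposes $p=\textsum_i\xi_i$, so $p\in\calD^{\partial}\cap\partial\calD$ and, by the equivalence just proved, $p\in\Omega$.

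Finally, for openness I would use that at any $\bfxi$ lying over a point of $\Omega$ the image of $d_{\bfxi}\Sigma$ restricted to $\textprod_i\partial D_i$ is exactly $L$, of dimension $d-1$. Hence $\Sigma|_{\prod_i\partial D_i}$ has locally constant rank $d-1$ near $\bfxi$, its image is a smooth hypersurface coinciding with $\partial\calD$ near $p$, and every nearby point $p'\in\calD^{\partial}\cap\partial\calD$ admits a decomposition $\bfxi'$ close to $\bfxi$. For such $\bfxi'$ one has $\dim L'\ge d-1$ by lower semicontinuity of the rank, while $p'\in\partial\calD$ forces $\dim L'\le d-1$; thus $\codim L'=1$ and $p'\in\Omega$. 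The point requiring care here is that an a priori different branch of $\calD^{\partial}$ could pass near $p$; the constant-rank theorem localizes $\calD^{\partial}$ to the single branch through $p$, which resolves the issue.
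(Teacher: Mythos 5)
Your normal-cone description $N(\calD,p)=\{u\in L^\perp:\lprod u,\nu_i\rprod\ge 0 \text{ for all } i\}$ is correct, the implication from $\codim L=1$ to uniqueness is fine, and your non-emptiness argument (generic $u_0$, tangent spaces $u_0^\perp\cap\langle D_i\rangle$ in general position spanning $u_0^\perp$ by \eqref{eq:condition_S}) is essentially the mechanism the paper uses in Proposition \ref{prop:vertices}; the paper's own proof of this lemma instead spends its effort on an iterative construction for non-emptiness and dismisses the equivalence in one line. The genuine gap is exactly where you locate it, in the ``delicate case'' of the converse, and your proposed resolution fails: the claim that a point on the relative boundary of a positive-dimensional face has a normal cone which is not a ray is false. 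Take $\calD'=D_2+D_3$ from Example \ref{ex:discotopes_segment} and $q=(0,\cos\theta,\sin\theta)+(1,0,0)$ with $\theta\in(0,\pi/2)$: this point lies on the relative boundary of the disc face $D_2+(1,0,0)=\calD'^{(1,0,0)}$, yet a direct check of the conditions $\xi_2\in D_2^u$, $\xi_3\in D_3^u$ shows that $u=(1,0,0)$ is the \emph{only} unit vector exposing $q$. Hence uniqueness of the exposing vector does not rule out $u_0\perp\langle D_i\rangle$, and the case $\codim L\ge 2$ with $\lprod u_0,\nu_i\rprod=0$ for some $i$ remains unhandled. Note also that you are attempting something stronger than the statement requires: the second bullet is existential over decompositions of $p$, while you fix an arbitrary one and try to show it works; in the problematic configuration the correct move is likely to \emph{change} the decomposition by moving a $\xi_i$ with $u_0\perp\langle D_i\rangle$ along $\partial D_i$ (the paper does exactly this kind of perturbation in its non-emptiness step), not to force a contradiction.

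There is a second, smaller gap in the openness argument. The constant-rank theorem controls $\Sigma$ only near the chosen preimage $\bfxi$, whereas a nearby point $p'\in\calD^\partial\cap\partial\calD$ may have all of its boundary decompositions accumulating (by compactness of $\prod\partial D_j$) at a \emph{different} point of $\Sigma^{-1}(p)\cap\prod\partial D_j$, where the rank of the differential could be smaller than $d-1$; lower semicontinuity of the rank then gives too weak a bound on $\dim L'$. Saying that the constant-rank theorem ``localizes $\calD^\partial$ to the single branch through $p$'' is not a valid step, since $\calD^\partial$ is the global image $\Sigma\bigl(\prod_j\partial D_j\bigr)$ and other sheets of this image can pass arbitrarily close to $p$. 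This part needs an argument that the exposing directions, and hence the boundary decompositions, of nearby points of $\partial\calD$ converge to those of $p$.
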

\begin{proof}
The equivalence of the two conditions follows from Lemma \ref{lem:Minkowski_sum}.
To show that $\Omega$ is non-empty, we construct a point in the following way. Consider $u\in\calU$ and let $p =  \sum_{i=1}^N \xi_i = \calD^u$. For the sake of notation, write $T_{\xi_i} = T_{\xi_i} \partial D_i$. Suppose that $L_{\bfxi} = T_{\xi_1} + \ldots + T_{\xi_N}$ is a subspace of codimension $c\geq 2$. Since $u\in\calU$, for every $i$ we have $\langle D_i \rangle \not \subseteq L_{\bfxi}$. Condition \eqref{eq:condition_S} implies that, up to relabeling, $T_{\xi_1} \cap \left( T_{\xi_2} + \ldots + T_{\xi_N} \right) = L' \neq \{0\}$. Let $L''$ be a complement of $L'$ in $T_{\xi_1}$, so that 
\begin{equation}
L' + L'' = T_{\xi_1} \qquad \hbox{and} \qquad L' \cap L'' = \{0\}.
\end{equation}
Consider the set of points $\bar{\xi}_1 \in \partial D_1$ such that $T_{\bar{\xi}_1}\supseteq L''$; let $\bar{\bfxi} = (\bar{\xi}_1, \xi_2 \vvirg \xi_N)$. For a generic choice of such $\bar{\xi}_1$ there exists $\bar{u}\in\calU$ such that $L_{\bar{\bfxi}} \subseteq \bar{u}^\perp$. Therefore the point $\bar{p} = \bar{\xi}_1 + \xi_2 + \ldots + \xi_N$ is an exposed point of $\calD$.  Moreover, if $\bar{\xi}_1\neq \pm \xi_1$ then $\codim L_{\bar{\bfxi}} \leq c -1$. 
Repeating this argument one constructs a point $\bfxi$ such that $\codim (T_{\xi_1} + \ldots + T_{\xi_N}) = 1$.
The condition that this codimension is $1$ is Zariski open, hence $\Omega$ is Euclidean open in $\calD^{\partial} \cap \partial \calD$.
\end{proof}

Recall the following property of the support function of a convex body $K$, see \cite[Corollary 1.7.3]{Sch:ConvexBodies}. The support function is differentiable at $u\in S^{d-1}$ if and only if the face $K^u$ is a unique point; this point coincides with $\nabla h_K(u)$. In particular, $h_{\calD}$ is differentiable at all points of $\calU$. This will be useful in the next result, to prove that the set of exposed points of $\calD$ is full dimensional in its boundary.

\begin{proposition}\label{prop:vertices}
In the hypotheses of Lemma \ref{lem:omega}, there exists an open dense subset $\calU'$ of $\calU$ such that $\nabla h_\calD| _{\calU'}$ is one to one.
\end{proposition}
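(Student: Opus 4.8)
The plan is to take $\calU' := (\nabla h_\calD)^{-1}(\Omega)$ and prove it is open, dense, and that the restriction of $\nabla h_\calD$ to it is injective. Injectivity is immediate: if $u_1,u_2\in\calU'$ satisfy $\nabla h_\calD(u_1)=\nabla h_\calD(u_2)=p$, then by the property recalled before the statement $p=\calD^{u_1}=\calD^{u_2}$, so both $u_1$ and $u_2$ expose $p$; since $p\in\Omega$, Lemma~\ref{lem:omega} grants a unique exposing direction, whence $u_1=u_2$.

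For the remaining two properties I would first identify $\calU'$ with a Hessian nondegeneracy locus. Using additivity of support functions, $h_\calD=\sum_i h_{D_i}$ with $h_{D_i}(u)=\sqrt{u^\top M_i u}$ for $M_i=A_{\calB_i}A_{\calB_i}^\top$, so that $\nabla h_\calD(u)=\sum_i\xi_i$ with $\xi_i=\nabla h_{D_i}(u)=D_i^u$. Each $\mathrm{Hess}\, h_{D_i}(u)$ is symmetric positive semidefinite with image $T_{\xi_i}\partial D_i$; since for positive semidefinite operators the image of a sum equals the sum of the images, the differential $d_u(\nabla h_\calD)=\sum_i\mathrm{Hess}\, h_{D_i}(u)$ has rank $\dim\big(\sum_i T_{\xi_i}\partial D_i\big)$. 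As this sum lies inside $u^\perp=T_uS^{d-1}$, the rank is at most $d-1$, with equality exactly when $\codim\big(\sum_i T_{\xi_i}\partial D_i\big)=1$, that is, exactly when $\calD^u\in\Omega$. Hence $\calU'=\{u\in\calU:\det(\mathrm{Hess}\, h_\calD(u)|_{u^\perp})\neq0\}$, which is open; it is nonempty because $\Omega\neq\emptyset$ by Lemma~\ref{lem:omega} and any point of $\Omega$ is exposed by a direction lying in $\calU$.

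Density is the main point, and I would derive it from real-analyticity. On $\calU=S^{d-1}\setminus\bigcup_j C_j$ each function $u\mapsto\sqrt{u^\top M_i u}$ is real-analytic, since its radicand is positive there; hence so are $h_\calD$ and the function $\Delta(u):=\det(\mathrm{Hess}\, h_\calD(u)|_{u^\perp})$. As $h_\calD$ is convex, $\Delta\geq0$, and $\Delta>0$ on the nonempty open set $\calU'$, so $\Delta$ is not identically zero. On a connected real-analytic manifold the zero set of a nonzero real-analytic function is a proper analytic subset, hence nowhere dense; therefore $\calU'=\{\Delta>0\}$ is dense. To secure connectedness I would first reduce to the case $N_1=0$ via Remark~\ref{rmk:zonotope_discotope}, writing $\calD=\calZ+\calD'$: when there are no segments every $C_j$ has codimension at least $2$ in $S^{d-1}$, so removing them leaves $S^{d-1}$ connected, while $\nabla h_\calD$ differs from $\nabla h_{\calD'}$ by the locally constant translation $\nabla h_\calZ$ on the relative interior of each normal cone of $\calZ$, which transports the conclusion for $\calD'$ to $\calD$.

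The step I expect to be most delicate is precisely this connectedness bookkeeping in the presence of one-dimensional summands: the equators $C_j$ coming from segments cut $S^{d-1}$ into several cones, and one must ensure that $\Delta$ is nonzero on each of them (equivalently, that the translated copies of $\nabla h_{\calD'}$ remain jointly injective across cones) rather than merely on one component. Isolating the case $N_1=0$ separates off the genuinely analytic phenomenon and makes the nowhere-density of $\{\Delta=0\}$ the heart of the proof.
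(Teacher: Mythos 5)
Your proposal is correct and its skeleton coincides with the paper's: both take $\calU'$ to be the locus where $\sum_i T_{\xi_i}\partial D_i = u^\perp$ (equivalently, where $\nabla h_\calD(u)\in\Omega$), and both deduce injectivity from the uniqueness of the exposing vector for points of $\Omega$ granted by Lemma \ref{lem:omega}. Where you genuinely diverge is the density of $\calU'$, which the paper asserts in one line ``because of the non-degeneracy condition'': implicitly its argument is algebraic --- the locus $\{u : \sum_i (u^\perp\cap\langle\calB_i\rangle)\neq u^\perp\}$ is Zariski closed in the irreducible variety $S^{d-1}$ and proper (a good $u$ is produced in the proof of Lemma \ref{lem:omega}), so its complement is Euclidean dense regardless of how many connected components the equators $C_j$ carve out. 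Your route instead goes through real-analyticity of $h_\calD$ on $\calU$, positive semidefiniteness of the Hessian (so that $\Delta\geq 0$ and $\mathrm{im}$ of the sum is the sum of the images), and connectedness of the domain; this is a valid and more hands-on argument, but it is precisely what forces the extra bookkeeping with segment summands that the algebraic version avoids. Two small points: the ``delicate step'' you flag is less delicate than you fear, since cross-cone injectivity is already secured by the $\Omega$ argument and never needs the translated copies of $\nabla h_{\calD'}$ to be jointly injective --- only the nowhere-density of $\{\Delta=0\}$ in each component matters, and that follows from $\mathrm{Hess}\,h_\calD=\mathrm{Hess}\,h_{\calD'}$ on $\calU$ together with nowhere-density in the connected $\calU_{\calD'}$. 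Also, for non-emptiness you should not claim that \emph{every} point of $\Omega$ is exposed by a direction in $\calU$ (and you should take $\calU'=\{\Delta\neq 0\}$ rather than $(\nabla h_\calD)^{-1}(\Omega)$, since Lemma \ref{lem:omega} only requires \emph{some} decomposition of $p$ to satisfy the codimension condition, so the two sets could a priori differ); it suffices that the point constructed in the proof of Lemma \ref{lem:omega} is exposed by a vector $\bar u\in\calU$ for which the canonical decomposition $\xi_i=D_i^{\bar u}$ has $\codim\bigl(\sum_i T_{\xi_i}\partial D_i\bigr)=1$, which is exactly what that construction delivers.
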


\begin{proof}
Fix $u\in \calU$ and denote by $p_u$ the point of the discotope exposed by $u$. Let $\xi_i \in \partial D_i$ be the unique point of the $i$-th disc such that
$ h_{D_i}(u) = \lprod \xi_i,u \rprod$; then $p_u = \sum_{i=1}^N \xi_i$. The tangent space $T_{\xi_i} \partial D_i = u^\perp \cap \langle \calB_i \rangle$  is a $(\dim D_i -1)$-dimensional subspace of $u^\perp$. Because of the non-degeneracy condition \eqref{eq:condition_S}, there exists a Euclidean open and dense subset $\calU'$ of $\calU$ such that for all $u\in\calU'$
\begin{equation}\label{eq:tangent_spaces_U'}
\sum_{i=1}^{N} T_{\xi_i} \partial D_i = u^\perp.
\end{equation}
If $u\in \calU'$ then the support function of the discotope is smooth in a neighborhood of $u$, because $h_\calD (u) = \sum h_{D_i} (u)$ and the $h_{D_i}$'s are smooth in a neighborhood of $u$. Hence we have the following map
\begin{align}
\nabla h_\calD| _{\calU'} : \calU' &\to \partial \calD \\
u &\mapsto \nabla h_\calD (u) = p_u.
\end{align}
Its image lies inside $\Omega$ because of \eqref{eq:tangent_spaces_U'}. Since these are exactly the points exposed by only one direction, $\nabla h_\calD| _{\calU'}$ is one to one.
\end{proof}

From Proposition \ref{prop:vertices}, we see that $\nabla h_\calD(\calU')$ is a set of exposed points which is open in $\partial \calD$; in particular it has dimension $d-1$. Moreover, $\nabla h_\calD$ defines a diffeomorphism between $\calU'$ and its image, therefore $\nabla h_\calD(\calU')$ consists of smooth points of $\partial_a \calD$. A consequence of this is that the Zariski closure of the exposed points (or equivalently of the extreme points) contains at least one irreducible component of $\calS$ of dimension $d-1$. This leads to the following result.
\begin{corollary}\label{cor:dim_S}
Let $\calD$ be a generic discotope such that the non-degeneracy condition \eqref{eq:condition_S} holds. Then $\calS$ has at least one irreducible component of dimension $d-1$ and this is an irreducible component of the algebraic boundary $\partial_a \calD$.
\end{corollary}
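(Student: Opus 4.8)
The plan is to turn the analytic facts of Proposition \ref{prop:vertices} into a statement about irreducible components, by a dimension count that exploits the purity of the algebraic boundary. First I would record the structural input: since $\calD$ is full dimensional and semialgebraic, its algebraic boundary $\partial_a\calD$ is a hypersurface in $\bbC^d$ (see \cite[Proposition 2.9]{Sinn:AlgebraicBoundaries}), hence of pure dimension $d-1$. This purity is exactly what will let me promote a top-dimensional subvariety of $\partial_a\calD$ to an honest irreducible component.

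Next I would feed in Proposition \ref{prop:vertices}. On the open dense subset $\calU'\subseteq\calU$ the gradient map $\nabla h_\calD|_{\calU'}$ is injective and, by the discussion preceding the corollary, restricts to a diffeomorphism onto its image $M:=\nabla h_\calD(\calU')$. As $\calU'$ is open in $S^{d-1}$ it has dimension $d-1$, so $M$ is a real $(d-1)$-dimensional set. Every point of $M$ is exposed by a unique direction and is of the form $\sum_i\xi_i$ with $\xi_i\in\partial D_i$, so $M\subseteq\calD^{\partial}\cap\partial\calD$ and therefore $M\subseteq\calS$; moreover the diffeomorphism property shows that the points of $M$ are smooth points of $\partial_a\calD$.

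Then I would pass to Zariski closures. Because $M$ is semialgebraic of real dimension $d-1$, its Zariski closure $\bar M\subseteq\bbC^d$ has complex dimension exactly $d-1$ (the dimension of a semialgebraic set equals that of its Zariski closure, \cite{BCR:RealAG}). Since $\calS$ is Zariski closed and $M\subseteq\calS$, we get $\bar M\subseteq\calS\subseteq\partial_a\calD$. Choosing an irreducible component $W$ of $\bar M$ with $\dim W=d-1$ (one exists, as $\dim\bar M=d-1$), the inclusion $W\subseteq\partial_a\calD$ together with purity forces $W$ to be an irreducible component of $\partial_a\calD$; and since $W\subseteq\calS\subseteq\partial_a\calD$, the same maximality identifies $W$ as an irreducible component of $\calS$. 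This yields both assertions.

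The hard part is not this final bookkeeping but the input it consumes: one must know that $\nabla h_\calD|_{\calU'}$ does not drop dimension, so that its image is genuinely $(d-1)$-dimensional rather than collapsed. This is precisely what the injectivity in Proposition \ref{prop:vertices}, together with the smoothness of $h_\calD$ on $\calU'$, supplies, by making $\nabla h_\calD$ a diffeomorphism onto $M$; the passage from the real dimension of $M$ to the dimension of its Zariski closure is then a standard fact of real algebraic geometry, and the purity of $\partial_a\calD$ does the rest.
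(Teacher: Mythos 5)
Your proof is correct and follows essentially the same route as the paper: both arguments rest on Proposition \ref{prop:vertices} producing a $(d-1)$-dimensional set of exposed points inside $\calS$, whose Zariski closure is then promoted to a common irreducible component of $\calS$ and $\partial_a\calD$. The only cosmetic difference is that you justify the final promotion via purity of the hypersurface $\partial_a\calD$, where the paper invokes smoothness of the image points of $\nabla h_{\calD}|_{\calU'}$ in $\partial_a\calD$; both close the same gap.
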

In general, it is not clear whether $\calS$ has multiple irreducible components, possibly even of different dimension. Indeed, the set $\calD^\partial = \Sigma \big( \textprod_{j} \partial D_{j} \big)$, introduced in Definition \ref{def:S}, may intersect positive dimensional faces of $\calD$. This might produce lower dimensional components of $\calS$. However, we expect this not to be the case, as stated in Conjecture \ref{conj:irr_S}.

We conclude this section pointing out that in general some boundary points of $\calD$ can be exposed by more than one vector. These can be identified by the following condition. Set $L_i = \langle D_i \rangle$, so that $L_1 \vvirg L_N$ are $N$ generic linear subspaces of $\bbR^d$. Consider the hyperplanes $H = u^\perp\subseteq \bbR^d$ for $u\in \calU$; hence $\dim \left( H\cap L_i \right) = \dim L_i - 1$ for every $i$. A point $p = \calD^u$ is exposed by more than one vector if and only if $H$ satisfies
\begin{equation}\label{eq:polymatroid}
\dim \left( \left( H \cap L_1 \right) + \ldots + \left( H \cap L_N \right) \right) \leq d-2.
\end{equation}
Indeed, when \eqref{eq:polymatroid} holds, there exists a linear subspace $V$ of dimension at least one such that $H = \left( H \cap L_1 \right) + \ldots + \left( H \cap L_N \right) +V$. By perturbing $V$ we obtain a family of hyperplanes that expose the point $p$.
The condition \eqref{eq:polymatroid} can be formulated in terms of a degeneracy property of an associated polymatroid, but a full characterization seems difficult in general. In the case $N=2$, boundary points exposed by more than one vector always occur; the hyperplanes exposing them are characterized in the following example.

\begin{example}\label{ex:two_planes_vertices}
Let $d_1 = \dim D_1$, $d_2 = \dim D_2$. By the non-degeneracy condition \eqref{eq:condition_S}, $d_1 + d_2 \geq d+1$. Let $L_i = \langle D_i\rangle$; by genericity $\dim (L_1 \cap L_2) = d_1+d_2-d$. For a hyperplane $H$ such that $\dim (L_i \cap H ) = d_i-1$, we have 
\begin{align*}
 &\dim ((L_1 \cap H) + (L_2 + H)) = \\ 
 &(d_1 - 1) + (d_2 - 1) - \dim (L_1 \cap L_2 \cap H) = \left\{ \begin{array}{ll} d - 2 & \text{ if $L_1 \cap L_2 \subseteq H$,}  \\ 
 d-1 & \text{otherwise.}
\end{array}
\right. 
\end{align*}
Therefore, a point $p \in \calD$ exposed by a hyperplane $H$ such that $L_1 \cap L_2 \not\subseteq H$, is exposed only by such hyperplane. On the other hand, if $p$ is exposed by a hyperplane $H$ with $L_1 \cap L_2 \subseteq H$, then there is a cone of hyperplanes $\tilde{H}$ with $L_1 \cap L_2 \subseteq \tilde{H}$ exposing $p$ as well. \\
The case $d = 3$, $d_1 = d_2 = 2$ is shown in Figure \ref{fig:space_discotopes}, right. This discotope $\calD'$ is defined in Example \ref{ex:discotopes_segment}; in this case $L_1\cap L_2$ is the vertical $x_3$-axis. The plane $\{x_3 = 0\}$ is partitioned into four $2$-dimensional cones and every $u$ in the interior of the same cone exposes the same point. These four exposed points are the pairwise intersection of two adjacent blue discs.
\end{example}

\section{Two-dimensional discs in $\bbR^d$}\label{sec:2discs}

In this section, we consider discotopes $\calD\subseteq \bbR^d$ of type $(0,N,0\vvirg 0)\in \bbN^{d}$, that are realized as sum of $2$-dimensional discs. If $N\leq d-1$, the variety $\calS$ is described by Theorem \ref{thm: joins}. Thus assume that $N\geq d-1$, which ensures that $\dim \calS = d-1$.
We will prove that the purely nonlinear part $\calS$ is irreducible, hence it is the Zariski closure of the extreme points of $\calD$. In addition, we will provide an upper bound for the degree of this component of the algebraic boundary. 
\begin{theorem}\label{thm:degS}
Let $\calD$ be a generic discotope of type $(0 , N , 0 \vvirg 0)$ in $\bbR^d$, with $N \geq d-1$. Let $\calS$ be the purely nonlinear part of $\calD$. Then $\calS$ is irreducible, and coincides with the Zariski closure of the set of extreme points of $\calD$. Moreover,
\begin{equation}
 \deg(\calS) \leq 2^N \cdot \binom{N}{d-1}.
\end{equation}
\end{theorem}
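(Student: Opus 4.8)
The plan is to study $\calS$ through the critical locus of the addition map and to recognize the criticality condition as a \emph{linear section of a classical determinantal variety}; both assertions then become statements about such a section. Write $L_i = \langle D_i\rangle$ and let $Q_i$ be the symmetric matrix with $\partial_a D_i = \{x\in L_i : x^T Q_i x = 1\}$. At a point $\xi_i\in\partial_a D_i$ the tangent line $T_{\xi_i}\partial D_i$ is spanned by a vector $t_i = t_i(\xi_i)$ obtained from the gradient $Q_i\xi_i$ by a fixed linear isomorphism of the plane $L_i$ (a $90^\circ$ rotation); in particular $t_i$ depends \emph{linearly} on $\xi_i$. Assembling the $t_i$ as the rows of an $N\times d$ matrix $M(\xi)$, Lemma \ref{lem:Minkowski_sum} says that $\xi\in\prod\partial_a D_i$ maps to the boundary of $\calD$ exactly when the rows of $M(\xi)$ lie in a common hyperplane, i.e. when $\corank M(\xi)\ge 1$. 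Thus the critical locus is $\calC = \{\xi\in\prod\partial_a D_i : \corank M(\xi)\ge 1\}$ and $\calS = \overline{\Sigma(\calC)}$.

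For irreducibility I would exhibit $\calC$ as a section of the determinantal variety. Let $\Delta\subseteq\Mat_{N\times d}$ be the locus of matrices of corank $\ge 1$, an irreducible variety of codimension $N-d+1$ and degree $\binom{N}{d-1}$ (for $N\ge d$; the condition is vacuous and the degree is $1$ when $N=d-1$). Since $\xi\mapsto M(\xi)$ is a linear embedding of $V = \bigoplus_i L_i$ with image the coordinate subspace $W = \{M : \text{row}_i(M)\in L_i\}$, the criticality condition cuts out on $\prod\partial_a D_i$ the section of $\Delta$ by $W$. Because the discs are generic, $W$ is a generic member of this constrained family of coordinate subspaces, and the goal is to show that the (non-generic) section $\Delta\cap W$, together with the defining quadrics of the conics, is irreducible of the expected dimension. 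Then $\calS = \overline{\Sigma(\calC)}$ is irreducible as the image of an irreducible variety; combined with Corollary \ref{cor:dim_S} and Proposition \ref{prop:vertices}, which already produce a $(d-1)$-dimensional component of exposed points, this identifies $\calS$ with the Zariski closure of the extreme points.

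For the degree I would intersect with a generic line $\ell\subseteq\bbC^d$. Since $\Sigma(\calC)$ is dense in $\calS$, every point of the finite set $\calS\cap\ell$ has at least one preimage in $\calC$, whence $\deg\calS\le\#\{\xi\in\calC : \Sigma(\xi)\in\ell\}$. Working inside the linear space $V$, this count is the number of isolated solutions of a single system: the determinantal condition $\corank M(\xi)\ge 1$, cutting the variety $\Delta\cap W$ of degree $\le\binom{N}{d-1}$; the $N$ quadrics $x_i^T Q_i x_i = 1$; and the $d-1$ affine-linear conditions $\Sigma(\xi)\in\ell$. A dimension count shows $\dim V = 2N$ equals the total codimension $(N-d+1)+N+(d-1)$, so the solutions are isolated, and the affine B\'ezout bound gives
\[
\deg\calS\ \le\ \binom{N}{d-1}\cdot 2^{N}\cdot 1^{\,d-1}\ =\ 2^{N}\binom{N}{d-1}.
\]

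The main obstacle is the irreducibility step. As $W$ is \emph{not} a generic linear subspace of $\Mat_{N\times d}$, one cannot directly invoke Bertini for the irreducibility of a generic linear section: one must prove that for generic discs the section $\Delta\cap W$ still meets $\Delta$ in the expected codimension and remains irreducible, controlling in particular how it interacts with the deeper corank strata in $\Sing\Delta$ so that a Bertini/connectedness argument applies. This is exactly the auxiliary statement on non-generic linear sections of determinantal varieties announced in the introduction, and is where the application of Bertini's theorem is needed. A secondary difficulty is that the count above is only an \emph{upper} bound: excess intersection and solutions at infinity (already visible in the degree-two conic parametrizations) obstruct equality, which would require a separate transversality analysis.
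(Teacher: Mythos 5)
Your overall architecture coincides with the paper's: identify the critical locus of $\Sigma$ with the section of the determinantal variety $\calM^{N\times d}_{d-1}$ by the subspace $W=\{M : \mathrm{row}_i(M)\in L_i\}$, prove that this section (together with the $N$ conics) is irreducible of the expected dimension, and push forward by the linear map $\Sigma$ to get irreducibility of $\calS$ and the bound $2^N\binom{N}{d-1}$. The identification of $\calS$ with the Zariski closure of the extreme points via Corollary \ref{cor:dim_S} and Proposition \ref{prop:vertices} is also the paper's argument, and your B\'ezout count along a generic line is an acceptable variant of the paper's degree computation (the paper instead uses Cohen--Macaulayness of $\calV(\Delta)$ and refined B\'ezout to get the exact degree $2^N\binom{N}{d-1}$ of $\crit\Sigma$ in Proposition \ref{prop:degCrit}, then observes that a linear projection cannot increase degree).

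However, the step you flag as ``the main obstacle'' is not a difficulty to be recorded but the actual mathematical content of the theorem, and your proposal does not supply it. The paper's Lemma \ref{lemma: linear sections determinantal} proves exactly the missing statement: imposing $s<r$ generic linear conditions \emph{row by row} on $\calM^{n\times m}_r$ yields an irreducible section of codimension $ns$. Its proof is a row-by-row induction: the linear forms on the $i$-th row are sections of $\calL_i=\Phi_i^*\calO(1)$, pulled back along the projection $\Phi_i$ onto the $i$-th row, whose base locus is the locus $\Gamma_i$ of matrices with vanishing $i$-th row; Jouanolou's Bertini (Lemma \ref{thm: bertini}) gives irreducibility and expected codimension away from $\Gamma=\bigcup_i\Gamma_i$, after one checks that $\Phi_t$ restricted to the partial section still dominates $\bbP^{m-1}$ (this is where $s<r$ enters); and a separate induction on $n$ with a dimension count shows no component can lie inside $\Gamma$. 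A further application of Bertini, after compactifying in $(\bbP^2)^N$, handles the intersection with the quadrics $c_j^2+s_j^2=1$. Without an argument of this kind, both of your claims are unproven: the irreducibility of $\calS$ obviously, but also the degree bound, since $\deg(\Delta\cap W)\le\binom{N}{d-1}$ and the zero-dimensionality of your final system both presuppose that the non-generic section has the expected dimension. In short: right strategy, correctly diagnosed crux, but the crux itself is left open.
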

Let $D_1 \vvirg D_N$ be $2$-dimensional discs in $\bbR^d$ in general position. For every $j=1\vvirg N$, consider the (complexification of the) embedding $A_j : \bbC^2 \to \bbC^d$ defining the generalized disc $D_j$; let $\calB_j = \{ b^{(j)}_1 , b^{(j)}_2 \}$ be the associated basis of the image of $A_j$. Then the product $\prod_{j=1}^N \partial_a D_j$ is the image of the restriction of $A = A_1 \ttimes A_N$ to $\prod \{c_j^2 + s_j^2=1\} \subseteq (\bbC^2)^N$. Here $(c_j,s_j)$ are the coordinates on the $j$-th copy of $\bbC^2$. Consider the addition map
\[
 \Sigma : \partial_a D_1 \ttimes \partial_a D_N \to \bbC^d.
\]
The critical locus of the restriction of $\Sigma \circ A$ is the variety defined by the ideal
\begin{equation}\label{eq:gensI}
    I = \Delta + \big( c_1^2+s_1^2-1, \ldots, c_N^2+s_N^2-1 \big) \subseteq \bbC[c_1,s_1 \vvirg c_N,s_N],
\end{equation}
where $\Delta$ is the ideal of the $d\times d$ minors of the $N\times d$ matrix
\begin{equation}\label{matrix:cs}
M = 
\begin{pmatrix}
b_2^{(1)} c_1 - b_1^{(1)} s_1 \\
\vdots \\
b_2^{(N)} c_N - b_1^{(N)} s_N 
\end{pmatrix}.
\end{equation}
This is the (transpose of the) matrix representing the differential of the restriction of $\Sigma \circ A$.
Since $A$ is a linear embedding, $\crit \Sigma $ is irreducible if and only if $\crit (\Sigma \circ A)$ is irreducible, and their degrees coincide. 

We will prove the irreducibility of $\crit \Sigma$ and compute its degree by first studying the variety $\calV(\Delta)$. We show that it is irreducible and that its degree coincides with the one of the classical determinantal variety of $N\times d$ matrices of submaximal rank. This is a consequence of Lemma \ref{lemma: linear sections determinantal}, which provides a more general result on special linear sections of the determinantal variety. This topic is object of classical study, see \cite{Eis:LinSec}, \cite[Section 6B]{Eisenbud:SyzygyBook}. However, these results rely on a specific condition, called $1$-genericity, which is not satisfied in our setting.

We state the following version of Bertini's Theorem for projective varieties, which can be obtained from \cite[Theorem 6.3]{jouanolouBertini} applied to the special case of quasi-projective varieties.
\begin{lemma}\label{thm: bertini}
 Let $X$ be an irreducible projective variety. Let $\calL$ be a line bundle on $X$ defining a map $\Phi:X \dashto \bbP^{h_0(\calL)-1}$ such that $\dim \Phi(X)\geq s+1$. Let $D_1 \vvirg D_s \in |\calL|$ be generic elements of the linear system defined by $\calL$. Let $Y = D_1 \cap \cdots \cap D_s$ and let $B$ be the base locus of $\calL$. Then $\bar{Y \setminus B}$ is irreducible of codimension $s$ in $X$.
\end{lemma}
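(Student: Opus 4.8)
The plan is to deduce the statement from the single‑hyperplane irreducibility version of Bertini's Theorem (\cite[Theorem 6.3]{jouanolouBertini}) by induction on $s$, working over $\bbC$ on the open locus where $\Phi$ is a genuine morphism. Set $U = X \setminus B$ and $M = h_0(\calL)-1$, so that $\Phi$ restricts to a morphism $\Phi : U \to \bbP^M$. Each $D_i$ corresponds to a hyperplane $H_i \subseteq \bbP^M$ with $D_i \cap U = \Phi^{-1}(H_i)$; hence, writing $\Lambda = H_1 \cap \cdots \cap H_s$, one has $Y \setminus B = \Phi^{-1}(\Lambda)$ with the preimage taken in $U$. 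Thus $\bar{Y \setminus B} = \bar{\Phi^{-1}(\Lambda)}$, and it suffices to prove that the preimage of a generic codimension‑$s$ linear subspace is irreducible of codimension $s$ in $X$.

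I would argue by induction on $s$. For $s = 1$, the hypothesis $\dim \Phi(X) \geq 2$ (which holds since $s \geq 1$) is exactly what is needed to invoke Jouanolou's theorem for the morphism $\Phi : U \to \bbP^M$: for a generic hyperplane $H_1$ the preimage $\Phi^{-1}(H_1)$ is irreducible, and since $\dim \overline{\Phi(U)} \geq 1$ a generic $H_1$ does not contain $\overline{\Phi(U)}$, so this preimage is a proper divisor, of codimension exactly $1$. For the inductive step, suppose $X' := \bar{\Phi^{-1}(H_1 \cap \cdots \cap H_{s-1})}$ is irreducible of codimension $s-1$. Restricting $\Phi$ to $U' := X' \cap U = \Phi^{-1}(H_1 \cap \cdots \cap H_{s-1})$ gives a morphism whose image is dense in $\overline{\Phi(U)} \cap H_1 \cap \cdots \cap H_{s-1}$; for generic $H_i$ this has dimension $\dim \Phi(X) - (s-1) \geq (s+1)-(s-1) = 2$, which in particular guarantees that $U'$ is nonempty (hence, being open dense in the irreducible $X'$, irreducible). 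Jouanolou's theorem therefore applies again to $\Phi|_{U'}$ and a generic hyperplane $H_s$, yielding that $\Phi^{-1}(H_s) \cap U' = \Phi^{-1}(\Lambda)$ is irreducible; its closure $\bar{Y \setminus B}$ is then irreducible of codimension $1$ in $X'$, i.e.\ of codimension $s$ in $X$, completing the induction.

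The main point requiring care is the persistence of Bertini's hypothesis along the induction: one must verify that after taking $s-1$ generic hyperplane sections the image $\overline{\Phi(U')}$ still has dimension at least $2$, and this is precisely where the numerical assumption $\dim \Phi(X) \geq s+1$ is used. This explains why $\dim \Phi(X) \geq s+1$, rather than merely $\dim \Phi(X) \geq 2$, is the correct hypothesis. The codimension count is then automatic, since at each stage the image has dimension $\geq 2 > 0$ and so a generic hyperplane section drops the dimension by exactly one.

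A second technical point is the assembly of genericity into a single dense open subset of $\bigl((\bbP^M)^\vee\bigr)^{s}$: the inductive argument a priori produces, for generic $(H_1 \vvirg H_{s-1})$, only a dense open set of admissible $H_s$ depending on that choice. I would handle this in the standard way by spreading out over the parameter space of the hyperplanes and using that geometric irreducibility of the fibers is an open condition in a family of finite presentation; equivalently, one runs the whole argument over the generic point of the product of the previous hyperplane choices. Since the resulting locus of good tuples is open and nonempty by the induction, it is dense, which yields the claim for generic $D_1 \vvirg D_s \in |\calL|$.
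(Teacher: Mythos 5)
Your proposal is correct and follows essentially the same route as the paper: both pass to the quasi-projective variety $X \setminus B$, where $\Phi$ becomes a genuine morphism, and invoke Jouanolou's Bertini irreducibility theorem. The paper simply cites \cite[Theorem 6.3 (4)]{jouanolouBertini} in one line, whereas you make explicit the induction on $s$ (tracking that the image still has dimension at least $2$ after each generic hyperplane section) and the bookkeeping needed to get a single dense open set of good tuples $(H_1 \vvirg H_s)$; these are exactly the details the paper delegates to the citation.
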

\begin{proof}
The proof follows from \cite[Theorem 6.3 (4)]{jouanolouBertini} applied to the quasi-projective variety $\tilde{X} = X \setminus B$ and the morphism $\Phi|_{\tilde{X}}$.
\end{proof}

Informally, this result guarantees that the intersection of generic divisors is irreducible and of the expected codimension outside of the base locus of the line bundle.

We use Lemma \ref{thm: bertini} to prove that certain non-generic linear sections of the classical determinantal variety are irreducible and of the expected dimension. Let $\Mat_{n\times m}$ denote the (complex) vector space of $n \times m$ matrices and let 
\[
\calM^{n\times m}_r  = \{ A \in \bbP \Mat_{n \times m} : \rank(A) \leq r\}
\]
be the $r$-th determinantal variety. Use coordinates $x_{ij}$ on $\Mat_{n \times m}$, where $x_{ij}$ is the entry at row $i$ and column $j$.

\begin{lemma}\label{lemma: linear sections determinantal}
Let $m,n , r \geq 2$ be integers with $r < m,n$. Let $s$ be an integer $1 \leq s < r$. For $i = 1 \vvirg n$, let $\ell^{(i)}_1 \vvirg \ell^{(i)}_s$ be generic linear forms on $\Mat_{n\times m}$ only involving the variables $\{ x_{ij} : j = 1 \vvirg m\}$ of the $i$-th row. Let 
\[
 \calY^{n \times m}_r = \calM^{n \times m}_r \cap \left\{ A \in \bbP \Mat_{n\times m}: \ell^{(i)}_p(A) = 0 \text{ for } i = 1 \vvirg n, p = 1 \vvirg s \right\}.
\]
Then $ \calY^{n \times m}_r$ is irreducible and of codimension $ns$ in $\calM^{n\times m}_r$.
\end{lemma}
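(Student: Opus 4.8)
The plan is to translate the row conditions into a geometric constraint and then run Bertini's Theorem (Lemma~\ref{thm: bertini}) row by row on the determinantal variety, using an induction on $n$ to dispose of the base loci. For each fixed $i$, the $s$ generic forms $\ell^{(i)}_1 \vvirg \ell^{(i)}_s$ only involve the variables of the $i$-th row, so imposing $\ell^{(i)}_p = 0$ for $p = 1 \vvirg s$ forces the $i$-th row of $A$ to lie in a generic linear subspace $W_i \subseteq \bbC^m$ of codimension $s$. Thus
\[
 \calY^{n\times m}_r = \{ A \in \calM^{n\times m}_r : \text{the $i$-th row of $A$ lies in } W_i \text{ for } i = 1 \vvirg n\}.
\]
Since $s < r$, every intersection $V \cap W_i$ with an $r$-dimensional subspace $V$ is nonzero, which is what keeps the whole construction nondegenerate.

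First I would establish the irreducible ``main part''. Set $X_0 = \calM^{n\times m}_r$, which is irreducible. For $i = 1 \vvirg n$, consider the rational map $\rho_i : X_{i-1} \dashto \bbP^{m-1}$ sending $A$ to the class of its $i$-th row; its base locus is $B_i = X_{i-1} \cap \{\text{$i$-th row} = 0\}$. Because the rows indexed by $i \vvirg n$ are still unconstrained in $X_{i-1}$, the map $\rho_i$ is dominant onto $\bbP^{m-1}$, so $\dim \rho_i(X_{i-1}) = m-1 \geq s+1$ (using $s \leq r-1 \leq m-2$). Taking the $s$ forms of the $i$-th row as generic members of $\rho_i^* |\calO(1)|$ and applying Lemma~\ref{thm: bertini}, I obtain that $X_i := \overline{(X_{i-1} \cap \{\ell^{(i)}_1 = \cdots = \ell^{(i)}_s = 0\}) \setminus B_i}$ is irreducible of codimension $s$ in $X_{i-1}$. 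After $n$ steps, $X_n$ is irreducible of codimension $ns$ in $\calM^{n\times m}_r$, and an easy induction (tracking that a point not in the excluded locus survives each closure) shows that $X_n$ contains every $A \in \calY^{n\times m}_r$ all of whose rows are nonzero.

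The remaining, and main, point is to show $\calY^{n\times m}_r = X_n$, i.e.\ that no spurious component hides in the base loci. The complement $\calY^{n\times m}_r \setminus X_n$ is contained in $\bigcup_i \{\text{$i$-th row} = 0\}$, and the stratum where the $i$-th row vanishes is canonically identified with $\calY^{(n-1)\times m}_r$ (delete the $i$-th row, keep the remaining generic subspaces). By the inductive hypothesis this stratum is irreducible; a dimension count using $s < r$ gives that it has dimension $\dim \calY^{n\times m}_r - (r-s)$, strictly smaller than $\dim X_n$. It then suffices to show each such stratum lies in $X_n$. For a generic matrix $A$ in the stratum (rank $r$, all surviving rows nonzero, row space $V$), I would pick a nonzero vector $v_i \in V \cap W_i$ and perturb the vanishing row to $t\,v_i$: for $t \neq 0$ this keeps the rank equal to $r$, keeps every row in its prescribed subspace, and makes all rows nonzero, exhibiting $A$ as a limit of points of $X_n$. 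Hence the stratum is contained in $X_n$, and $\calY^{n\times m}_r = X_n$ is irreducible of codimension $ns$.

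I expect the hardest part to be precisely this last step: controlling the base loci of the maps $\rho_i$ and proving that the lower-dimensional strata where some rows vanish do not split off as separate components but are reabsorbed into the closure of the generic locus. This is exactly where the hypothesis $s < r$ is indispensable, since it guarantees both that $V \cap W_i \neq 0$ (so the perturbation exists) and that the vanishing-row strata have positive codimension $r-s$; it is the reason the statement requires $1 \leq s < r$ rather than merely $s < m,n$. Verifying the Bertini hypothesis $\dim \rho_i(X_{i-1}) \geq s+1$ at each intermediate stage, while routine, also deserves some care, because the intermediate varieties $X_{i-1}$ are no longer generic linear sections of $\calM^{n\times m}_r$.
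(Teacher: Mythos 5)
Your proposal is correct and its overall architecture coincides with the paper's: you slice $\calM^{n\times m}_r$ row by row using the rational projection $\rho_i$ onto the $i$-th row, apply Bertini (Lemma~\ref{thm: bertini}) at each stage to get an irreducible main component of the expected codimension outside the base loci $\{\text{$i$-th row}=0\}$, and then run an induction on $n$ to rule out extra components supported on the vanishing-row loci, using the identification of the stratum $\{\text{$i$-th row}=0\}\cap \calY^{n\times m}_r$ with $\calY^{(n-1)\times m}_r$. The one genuinely different ingredient is the last step. The paper argues by contradiction with a codimension count: a component $C$ contained in $\Gamma_i$ would sit inside $\calY^{(n-1)\times m}_r$, which by induction has codimension $(n-1)s$ in $\calM^{(n-1)\times m}_r$, forcing $\codim_{\calM^{n\times m}_r}(C)= ns+(r-s)>ns$ and contradicting the fact that $\calY^{n\times m}_r$ is cut out by $ns$ equations (Krull). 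You instead show directly that each vanishing-row stratum is \emph{absorbed} into the closure of the main component, by perturbing the zero row to $t\,v_i$ with $0\neq v_i\in V\cap W_i$. Both work; your degeneration argument is more constructive and explains \emph{why} the strata are not components rather than merely that they cannot be, but it costs you the additional (routine, and unstated) verifications that the generic point of the stratum has rank exactly $r$ and all surviving rows nonzero, for which you again need the inductive irreducibility; the paper's Krull argument sidesteps these checks.

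One place where your write-up is thinner than it should be is the Bertini hypothesis $\dim\rho_i(X_{i-1})\geq s+1$. Your justification --- that the rows indexed $i\vvirg n$ are ``still unconstrained'' in $X_{i-1}$ --- is not literally true once $i>r$: if the first $i-1$ rows of a matrix in $X_{i-1}$ already span an $r$-dimensional space $E$, the rank condition forces the $i$-th row into $E$. Dominance of $\rho_i$ is nevertheless true, but it requires the argument the paper gives: for any $v$, choose an $r$-plane $E\ni v$, and use $s<r$ (so that $E\cap W_j\neq 0$ for every $j$, and $(i-1)(r-s)\geq r$) to realize $E$ as the span of the first $i-1$ constrained rows, then put $v$ in row $i$. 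You correctly flag this step as needing care, but it is the second point (besides your perturbation) where the hypothesis $s<r$ is genuinely used, and it should be carried out rather than asserted. You should also state the base case $n=r$ of the induction explicitly (there $\calM^{r\times m}_r$ is all of $\bbP\Mat_{r\times m}$ and $\calY^{r\times m}_r$ is a linear space), since the lemma as stated assumes $r<n$ and the induction steps down to $n-1=r$.
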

\begin{proof}
For $i = 1 \vvirg n$, let 
\[
 \Gamma_i = \{ A \in \bbP \Mat_{n \times m}: a_{ij} = 0 \text{ for all } j = 1 \vvirg m\}
\]
be the linear subspace of matrices having zero $i$-th row. Let $\Gamma = \bigcup_{i =1}^n \Gamma_i$.

For $t = 0 \vvirg n$, let 
\[
\calY^{(t)} = \calM^{n \times m}_r \cap \left\{ A \in \bbP \Mat_{n\times m}: \ell^{(i)}_p(A) = 0 \text{ for } i = 1 \vvirg t, p = 1 \vvirg s \right\} ;
\]
we have $\calM^{n \times m}_r = \calY^{(0)} \supseteq \calY^{(1)} \supseteq \cdots \supseteq \calY^{(n)} = \calY^{n\times m}_r$.

Let $\Phi_i : \bbP \Mat_{n \times m} \dashto \bbP ^{m-1}$ be the projection on the $i$-th row; $\Phi_i$ is a rational map, whose indeterminacy locus is $\Gamma_i$. Let $\calL_i = \Phi_i^* \calO(1)$ be the pullback of the hyperplane bundle on $\bbP^{m-1}$: global sections of $\calL_i$ are linear forms only involving the variables of the $i$-th row; in particular the base locus of $\calL_i$ is exactly $\Gamma_i$. 

For a fixed $n$, we use induction on $t$ to show that $\calY^{(t)}$ is irreducible up to components contained in $\Gamma$, in the sense that $\calY^{(t)} \setminus \Gamma$ is irreducible. 

If $t = 0$, then $\calY^{(t)} = \calM^{n \times m}_r$ is irreducible. If $t \geq 1$, then $\calY^{(t)}$ is the intersection of $s$ divisors $D_1 \vvirg D_s \in \bigl|\calL_i |_{\calY^{(t-1)}}\bigr|$ on $\calY^{(t-1)}$, where $D_p = \{ \ell^{(t)}_p = 0\}$ and $\calL_i |_{\calY^{(t-1)}}$ is the restriction of $\calL_i$ to $\calY^{(t-1)}$. By the induction hypothesis, $\calY^{(t-1)}$ is a union of irreducible components, only one of which is not contained in $\Gamma$. 

In order to apply Lemma \ref{thm: bertini}, we need $\dim \Phi_t \left( \calY^{(t-1)} \right) \geq s+1$. In fact we show that $\Phi_t \left( \calY^{(t-1)} \right) = \bbP^{m-1}$. If $t\leq r$, this is clear because for every choice of the first $t$ rows, the corresponding matrix can be completed to a rank $r$ matrix. If $t>r$, notice that every $r$-dimensional subspace $E \subset \bbC^m$ can be realized as the span of the first $t-1$ rows of a matrix in $ \calY^{(t-1)} $. Since $s<r$, for every $i = 1 \vvirg t-1$ the intersection of $E$ with the subspace of $\bbC^m$ cut out by the linear forms $\ell^{(i)}_1 \vvirg \ell^{(i)}_s$ is non-trivial. Consider the matrix $A\in \calY^{(t-1)}$ whose $i$-th row is a generic element of this intersection for $i < t$, and suitably completed to a rank $r$ matrix. By the genericity of the linear forms the span of the first $t-1$ rows of $A$ is exactly $E$. Fix now $v \in \bbC^m$ and let $E$ be an $r$-dimensional subspace containing $v$. The associated matrix $A$ constructed above can be chosen so that the $t$-th row coincides with $v$. In this way, $\Phi_t (A) = v$ and $\dim \Phi_t \left( \calY^{(t-1)} \right) = m-1 \geq s+1$ follows.

Therefore Lemma \ref{thm: bertini} applies and we obtain that $\calY^{(t)}$ is irreducible up to components contained in the base locus of $\calL_i$, that is $\Gamma_i \subseteq \Gamma$. This proves the desired property for $\calY^{(t)}$ and, in particular, shows that $\calY^{n \times m}_r$ is irreducible up to components contained in $\Gamma$. 

For every $t$, let $Y^{(t)}$ be the component of $\calY^{(t)}$ not contained in $\Gamma$. In particular, $Y^{(t)}$ is not contained in the base locus of $\calL_i|_{Y^{(t-1)}}$; therefore, by Lemma \ref{thm: bertini}, it has the expected codimension. This provides $\codim _{\calM^{n\times m}_r} (Y^{(t)}) = ts$.

Finally, we prove that in fact $\calY^{n \times m}_r$ does not have components contained in $\Gamma$, thus it is irreducible. This is proved by induction on $n$. 
The base case of the induction is $n=r$. In this case $\calM^{n \times m}_r$ is the whole space $\bbP \Mat_{r \times m}$ and $\calY^{n \times m}_r$ is the transverse intersection of $ns$ linear spaces. Therefore it is irreducible.

Let $n > r$. Suppose by contradiction that $\calY^{n \times m}_r$ has at least one component, denoted by $C$, contained in $\Gamma$. Then $C \subseteq \Gamma_i$ for some $i$; without loss of generality, suppose $i=n$. Identify $\Mat_{(n-1) \times m}$ with the subspace of $\Mat_{n \times m}$ having the $n$-th row equal to $0$. Under this identification, the component $C$ is contained in $\calY^{(n-1) \times m}_r$, so $\dim C \leq \dim \calY^{(n-1) \times m}_r$. By the induction hypothesis, $\calY^{(n-1) \times m}_r$ is irreducible, so it coincides with its only component not contained in $\Gamma$ and in particular it has the expected codimension in $\calM^{(n-1) \times m}_r$. We obtain
\begin{align*}
 \dim C \leq  \dim \calY^{(n-1) \times m}_r &= \dim \calM ^{(n-1) \times m}_r - (n-1)s \\
  &= r((n-1)+m-r) - (n-1)s \\
  &= r(n+m-r) - ns - (r-s) \\
  &= \dim \calM^{ n \times m}_r - ns - (r-s).
\end{align*}
This implies $\codim _{\calM^{n \times m}_r}(C) > ns$ in contradiction with the fact that $\calY^{n \times m}_r$ is cut out by $ns$ equations in $\calM^{n \times m}_r$. We conclude that $\calY^{n \times m}_r$ has no components contained in $\Gamma$; thus it is irreducible. 
\end{proof}

\begin{remark}
In Lemma \ref{lemma: linear sections determinantal}, it is not necessary to have the same number of linear relations on every row. The same argument applies if, on the $i$-th row, one has $s_i$ linear relations, with $s_i < r$ for every $i$. Then $\calY^{n \times m}_r$ is irreducible and of codimension $\sum s_i$ in $\calM^{n\times m}_r$.
\end{remark}
Lemma \ref{lemma: linear sections determinantal} shows that linear sections of the determinantal variety only involving a single row are \emph{generic enough} in the sense that they preserve irreducibility and have the expected dimension. We apply Lemma \ref{lemma: linear sections determinantal} to the variety $\calV(\Delta)$: in this case $r = d-1$ and $s = d-2$.

\begin{proposition}\label{prop:degCrit}
The variety $\crit ( \Sigma )$ is irreducible, of dimension $d-1$, and degree $2^N\binom{N}{d-1}$.
\end{proposition}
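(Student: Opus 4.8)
The plan is to analyze $\crit\Sigma$ through the ideal $I=\Delta+(c_1^2+s_1^2-1,\dots,c_N^2+s_N^2-1)$, first understanding the determinantal locus $\calV(\Delta)$ and then cutting it with the $N$ quadrics. Since $A$ is a linear embedding it suffices to treat $\calV(I)=\crit(\Sigma\circ A)$. I would begin by recording the structure of $\calV(\Delta)$. The assignment $(c,s)\mapsto M$ is a linear isomorphism of $\bbC^{2N}$ onto the space $W$ of $N\times d$ matrices whose $j$-th row lies in $\langle\calB_j\rangle$, and it identifies $\calV(\Delta)$ with $\calM^{N\times d}_{d-1}\cap\bbP(W)$. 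Confining the $j$-th row to a generic plane amounts to $d-2$ generic linear conditions on that row, so this is exactly the section $\calY^{N\times d}_{d-1}$ of Lemma \ref{lemma: linear sections determinantal} with $r=d-1$, $s=d-2$. By that lemma $\calV(\Delta)$ is irreducible of codimension $N(d-2)$ in $\calM^{N\times d}_{d-1}$, hence of dimension $N+d-1$; moreover the intermediate sections are by hyperplanes and are proper and generically reduced (Bertini, Lemma \ref{thm: bertini}), so degrees are preserved and $\deg\calV(\Delta)=\deg\calM^{N\times d}_{d-1}=\binom{N}{d-1}$.

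Next I would pass to the coordinates $u_j=c_j+\sqrt{-1}\,s_j$, $v_j=c_j-\sqrt{-1}\,s_j$, turning each quadric into $u_jv_j=1$; in particular every point of $\calV(I)$ has all coordinates nonzero, so $\calV(I)$ lies in the torus. Writing $w_j^{\pm}=\tfrac12(b_2^{(j)}\pm\sqrt{-1}\,b_1^{(j)})$, the $j$-th row of $M$ becomes $u_jw_j^++v_jw_j^-$, and $\calV(\Delta)$ is invariant under the torus $(\bbC^*)^N$ rescaling each pair $(u_j,v_j)$. The quotient of $\calV(\Delta)^\circ=\calV(\Delta)\cap\{u_jv_j\neq0\}$ by this torus is $P^\circ=\{\tau\in(\bbC^*)^N:\rank(\text{rows }\tau_jw_j^++w_j^-)\le d-1\}$ with $\tau_j=u_j/v_j$; it is irreducible, being a quotient of the irreducible $\calV(\Delta)^\circ$, and the rank condition presents $P^\circ$ as birational to the space of hyperplanes $(\bbP^{d-1})^*$ via $\tau_j=-\ell_j^-(a)/\ell_j^+(a)$, where $\ell_j^{\pm}(a)=\lprod a,w_j^\pm\rprod$. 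Thus $\dim P^\circ=d-1$, and $\calV(I)$, being the pullback of $P^\circ$ along the squaring map $\tau_j=u_j^2$, is a $(\bbZ/2)^N$-cover of $P^\circ$ of dimension $d-1$.

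The heart of the proof is the irreducibility of $\calV(I)$, equivalently the connectedness of this cover. By Kummer theory the cover is connected exactly when no nonempty subproduct $\prod_{j\in S}\tau_j$ is a square in $\bbC(P^\circ)=\bbC(a)$. For generic discs the $2N$ linear forms $\ell_j^{\pm}$ are pairwise non-proportional, so $\prod_{j\in S}\tau_j=(-1)^{|S|}\big(\prod_{j\in S}\ell_j^-\big)\big/\big(\prod_{j\in S}\ell_j^+\big)$ vanishes to odd order along $\{\ell_{j_0}^-=0\}$ for any $j_0\in S$ and hence is not a square; this yields connectedness and therefore irreducibility. I expect this reduction --- from irreducibility of $\crit\Sigma$ to the non-squareness of the ratios $\tau_j$ --- to be the main obstacle, and the substitution $u_j=c_j+\sqrt{-1}\,s_j$ together with the torus quotient to be the key device that exposes it.

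For the degree I would use B\'ezout. Homogenizing the quadrics to $c_j^2+s_j^2-x_0^2$ in $\bbP^{2N}$, the locus at infinity $\overline{\calV(\Delta)}\cap\bigcap_j\{c_j^2+s_j^2=0\}=\overline{\calV(\Delta)}\cap\bigcap_j(\{u_j=0\}\cup\{v_j=0\})$ has dimension at most $d-2$ for generic discs, so the only $(d-1)$-dimensional component of the total intersection is the irreducible variety $\calV(I)$. Checking generic transversality (intersection multiplicity one at a smooth point of $\calV(I)$) then gives $\deg\calV(I)=\binom{N}{d-1}\cdot 2^N$. Passing back through the linear embedding $A$ transfers all three conclusions --- irreducibility, dimension $d-1$, and degree $2^N\binom{N}{d-1}$ --- to $\crit\Sigma$.
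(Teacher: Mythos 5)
Your proposal is correct in substance and reaches all three conclusions, but it takes a genuinely different route from the paper at the key step. Both proofs begin identically: reduce to $\crit(\Sigma\circ A)=\calV(I)$ and apply Lemma \ref{lemma: linear sections determinantal} with $r=d-1$, $s=d-2$ to get that $\calV(\Delta)$ is irreducible of dimension $N+d-1$ and degree $\binom{N}{d-1}$. For the intersection with the quadrics, the paper simply iterates Bertini (Lemma \ref{thm: bertini}) once more: it passes to $(\bbP^2)^N$, views each homogenized circle $\{c_j^2+s_j^2=z_j^2\}$ as a generic element of $\lvert\calO_{(\bbP^2)^N}(0,\dots,2,\dots,0)\rvert$, and cuts down one quadric at a time to preserve irreducibility and drop the dimension by one each step; the degree then comes from the Cohen--Macaulayness of $\calV(\Delta)$ together with dimensional transversality, via the refined B\'ezout theorem. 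You instead diagonalize the quadrics to $u_jv_j=1$, quotient $\calV(\Delta)$ by the row-rescaling torus onto a variety $P^\circ$ birational to $(\bbP^{d-1})^*$, and exhibit $\calV(I)$ as the $(\bbZ/2)^N$-Kummer cover given by $u_j^2=\tau_j$ with $\tau_j=-\ell_j^-(a)/\ell_j^+(a)$; irreducibility then reduces to no subproduct $\prod_{j\in S}\tau_j$ being a square in $\bbC(a)$, which you settle by an odd order of vanishing along $\{\ell_{j_0}^-=0\}$ using the pairwise non-proportionality of the $2N$ linear forms for generic discs. This is a valid argument and arguably buys more: it produces an explicit birational model of $\crit\Sigma$ as a $2^N$-cover of $\bbP^{d-1}$ (consistent with the $(2,2,2)$-divisor in $(\bbP^1)^3$ found for the dice in Section \ref{sec: dice} and the $2^{N-1}$ real components noted at the end of Section \ref{sec:2discs}), and it makes the required genericity completely concrete; the price is the extra work of verifying that the parametrization of $P^\circ$ by hyperplanes is dominant and generically injective, which you assert but should justify (it follows because the fiber over a generic $\tau$ is a transverse intersection of $N$ hyperplanes in $\bbP^{d-1}$, and the strata where some $\ell_j^+=\ell_j^-=0$ have dimension below $d-1$). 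Your B\'ezout computation of the degree, with the explicit bound $\dim\le d-2$ on the locus at infinity replacing the paper's appeal to the Cohen--Macaulay property, is also fine, though the generic transversality you flag at the end does need to be checked (the paper's version has an analogous implicit generic-reducedness step).
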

\begin{proof}
Since $A = A_1 \ttimes A_N$ is a linear embedding, it suffices to prove the statement for $\crit (\Sigma \circ A)$, that is the variety defined by the ideal $I$ in \eqref{eq:gensI}.

By Lemma \ref{lemma: linear sections determinantal}, the variety $\calV(\Delta) \subseteq \bbC^2 \ttimes \bbC^2$ is irreducible of dimension $N + d-1$.  Consider its closure in $\bbP^2 \ttimes \bbP^2$, where the $j$-th copy of $\bbP^2$ has homogeneous coordinates $[c_j,s_j , z_j]$. For every $j = 1\vvirg N$, the polynomial $c_j^2 + s_j^2 - 1$ on $\bbC^2$ defines a homogeneous quadric $\{ c_j^2 + s_j^2 - z_j^2 = 0\}$ on $\bbP^2$. This gives a generic element of $| \calO_{\bbP^2}(2) |$, which pulls back to a generic element $ Q_j \in | \calO_{(\bbP^2)^{N}} ( 0 \vvirg 0,2,0 \vvirg 0 )|$.
Recursively applying Lemma \ref{thm: bertini}, for every $j$ we have that $\calV(\Delta) \cap Q_1 \cap \cdots \cap Q_j$ is irreducible of dimension $N-j+d-1$. For $j = N$, we obtain the irreducibility of $\calV(\Delta) \cap Q_1 \cap \cdots \cap Q_N$.

As a consequence, $\crit(\Sigma \circ A) = \calV(I)$ is irreducible of dimension $d-1$. In particular, the intersection of the determinantal variety $\calV(\Delta)$ with the quadrics is dimensionally transverse. Moreover, $\calV(\Delta)$ is arithmetically Cohen-Macaulay, see e.g. \cite[Chapter 2]{ArCoGrHa:Vol1}. Therefore \cite[Corollary 2.5]{EisHar:3264} guarantees 
\begin{align*}
 \deg \left(\crit \left( \Sigma \circ A \right) \right) &= \deg \Big(\calV\left( \Delta \right) \Big) \cdot \prod_{i=1}^N \deg \left( \partial_a D_i \right) = \binom{N}{d-1} \cdot 2^N.
\end{align*}
\end{proof}

\begin{proof}[Proof of Theorem \ref{thm:degS}]
The irreducibility of $\crit(\Sigma)$ implies the irreducibility of its image under the addition map $\Sigma$, that is the purely nonlinear part $\calS$. By the linearity of $\Sigma$, we obtain an upper bound on the degree of $\calS$:
\[
 \deg(\calS) \leq 2^N \cdot \binom{N}{d-1} .
\]
From the discussion in Section \ref{sec:exposed_points}, the set of extreme points of $\calD$ is contained in $\calS$ and contains a Zariski dense subset of (at least) one of the components of $\calS$. By irreducibility, we conclude. 
\end{proof}

We end this section with some observations in the case of discotopes of type $\bfN = (0,N)$ in $\bbR^2$. In this case $\partial_a\calD = \calS$, which is an irreducible curve of degree $2^N \cdot N$. 
The real points of $\crit \Sigma $ come naturally in $2^{N-1}$ connected components, described as follows. Given a line $\ell \subseteq \bbR^2$ through the origin, there are exactly two points $\pm p_i$ on each ellipse $\partial D_i$ such that $T_{\pm p_i} \partial D_i$ is parallel to $\ell$. The choice of these signs (up to a global sign) determines locally a parametrization of the real points of $\crit \Sigma$, which has $2^{N-1}$ connected components. After the projection to $\bbR^2$,  many components of the real points of $\crit \Sigma$ can be mapped to the same connected component of the real points of $\calS$. This can be visualized in the example in Figure \ref{fig:planar_discotopes}, where the red curve $\calS$ is union of $2^2 = 4$ subsets homeomorphic to circles: these are the images of the $4$ connected components of $\crit \Sigma$. Exactly one of them is the topological boundary of $\calD$.

Furthermore the degree of the map $\Sigma : \crit(\Sigma) \to \calS$ is odd. By a density argument, this can be computed considering the fiber over a generic point $p \in \partial \calD$. This contains a single real point $(\xi_1 \vvirg \xi_N)$ where $\xi_j \in \partial D_j$ is the unique point exposed by the vector $u \in S^{1}$ which exposes $p$; the non-real points of $\Sigma^{-1}(p)$ come in pairs of complex conjugates, therefore there is an even number of them. We conclude that the fiber $\Sigma^{-1}(p)$ consists of an odd number of points, hence the degree of $\Sigma$ is odd.

\begin{remark}\label{rmk:L4ball}
In the case $d=2$ the degree of the critical locus of $\Sigma$ is $2^N\cdot N$ and the degree of the map $\Sigma : \crit \Sigma \to \calS$ is odd. Write $N = 2^\kappa \cdot M$, with $M$ odd. Then $\deg(\calS)$ is necessarily an odd multiple of $2^{N} \cdot 2^\kappa$.
A consequence of this is that the unit ball of the $L^4$-norm $\{ x_1^4 + x_2^4 \leq 1 \}$ is not a discotope. If it was a discotope, it would be of type $(0,N)$ for some $N \geq 2$. But this discussion shows that no curve of degree $4$ is the boundary of a discotope of type $(0,N)$ in $\bbR^2$.
\end{remark}

\section{The Dice}\label{sec: dice}

In this section, we provide an extended analysis of the algebro-geometric features of the surface $\calS \subseteq \bbC^3$ for a specific discotope of type $\bfN = (0,3,0)$. 
In particular, we will show that $\calS$ is birational to a smooth K3 surface, realized as a divisor of multidegree $(2,2,2)$ in $\bbP^1 \times \bbP^1 \times \bbP^1$.
This example was first studied in \cite[Section 5.3]{MatMer:FiberConvexBodies} in the context of fiber bodies: it provides a semialgebraic convex body having a fiber body which is not semialgebraic. Notice that, up to changing coordinates, the generic case of a discotope of type $\bfN = (0,3,0)$ can be reduced to the case where the three generalized discs of interest lie in the three coordinate hyperplanes. We further restrict to the case of three unit discs: 
\begin{align*}
 D_1 = \{ (x_1,x_2,x_3) : x_1=0; x_2^2+x_3^2 \leq 1\} ,\\
 D_2 = \{ (x_1,x_2,x_3) : x_2=0; x_1^2+x_3^2 \leq 1\} ,\\
 D_3 = \{ (x_1,x_2,x_3) : x_3=0; x_1^2+x_2^2 \leq 1\} .
\end{align*}
Let $\calD \subseteq \bbR^3$ be the resulting discotope and let $\calS \subseteq \bbC^3$ be its purely nonlinear part. By \cite[Section 5.3]{MatMer:FiberConvexBodies} and Theorem \ref{thm:degS}, $\calS$ is an irreducible surface of degree $24$. Its defining polynomial $F_{\calS}$ has $455$ terms. Because of the symmetries of the problem, all the monomials appearing in $F_{\calS}$ are squares. Since $\calS$ is the image of a polynomial map, $F_{\calS}$ can be computed via elimination theory \cite[Section 4.4, Theorem 3]{CoxLitOSh:IdealsVarsAlgs}. More precisely, consider the ideal
\begin{equation}
J = I + \Big( (x_1,x_2,x_3) - \sum_{i=1}^3 b_1^{(i)}c_i + b_2^{(i)}s_i \Big) \subset \bbC[x_i,c_i,s_i : i = 1,2,3]
\end{equation}
where $b_1^{(1)} = b_2^{(3)} = (0,1,0)$, $b_1^{(2)} = b_2^{(1)} = (0,0,1)$, $b_1^{(3)} = b_2^{(2)} = (1,0,0)$ and $I$ is the ideal in \eqref{eq:gensI}. Then $F_{\calS}$ is the unique (up to scaling) generator of $J \cap \bbC[x_1,x_2,x_3]$ and it can be computed using a computer algebra software, e.g., \verb|Macaulay2| \cite{M2}.

One can verify that the surface $\calS$ is singular in codimension $1$. The singular locus is highly reducible and has degree $294$. Our next goal is to construct a desingularization of $\calS$. Consider the rational parametrization of the (complex) circle $\psi:  t  \mapsto  (\textfrac{1-t^2}{1+t^2}, \textfrac{2t}{1-t^2})$. Let $\Sigma \circ (\psi_1 \times \psi_2 \times \psi_3)$ be the composition of the addition map with the parameterization of the three circles $\partial_a D_i$;  explicitly
\begin{equation}\label{eqn: parameterization of S in dice}
\begin{array}{ccccc}
 \bbC \times \bbC \times \bbC &\overset{\psi_1 \times \psi_2 \times \psi_3}{\dashto} &\partial_{a} D_1 \times \partial_{a} D_2 \times \partial_{a} D_3& \to & \bbC^3 \\
 (t_1,t_2,t_3) & \mapsto & \left( \left(\begin{smallmatrix} 0 \\ \frac{1- t_1^2}{1+t_1^2} \\ \frac{2t_1}{1+t_1^2}\end{smallmatrix} \right) ,
			          \left(\begin{smallmatrix} \frac{2t_2}{1+t_2^2} \\ 0 \\ \frac{1- t_2^2}{1+t_2^2} \end{smallmatrix} \right) ,
				  \left(\begin{smallmatrix} \frac{1- t_3^2}{1+t_3^2} \\ \frac{2t_3}{1+t_3^2} \\ 0  \end{smallmatrix} \right)  \right) & & \\ ~ \\
& & \left( \left(\begin{smallmatrix} x_1 \\ x_2 \\ x_3 \end{smallmatrix} \right) ,
	 \left(	\begin{smallmatrix} y_1 \\ y_2 \\ y_3 \end{smallmatrix} \right) ,
	 \left(	\begin{smallmatrix} z_1 \\ z_2 \\ z_3 \end{smallmatrix}  \right) \right)  & \mapsto & \left(\begin{smallmatrix} x_1 +y_1+z_1\\ x_2 +y_2 + z_2\\ x_3 +  y_3 + z_3 \end{smallmatrix} \right).
\end{array} 
\end{equation}
The differential of the composition is 
\[
M(t_1,t_2,t_3) = \left( \begin{array}{ccc}
                                    0 & \frac{1-t_2^2}{1+t_2^2} & \frac{-2t_3}{1-t_3^2}  \\
                                    \frac{-2t_1}{1+t_1^2} & 0 & \frac{1-t_3^2}{1+t_3^2} \\
                                    \frac{1-t_1^2}{1+t_1^2} & \frac{-2t_2}{1+t_2^2} & 0 \\
                                   \end{array}
\right)
\]
so that the critical locus is the hypersurface in $\bbC^3$ determined by the vanishing of
\begin{equation}\label{eqn: critical locus phi affine}
\det( M(t_1,t_2,t_3) ) = \frac{1}{(1+t_1^2)(1+t_2^2) (1+t_3^2)} \left[(1-t_1^2)(1-t_2^2) (1-t_3^2) - 8t_1t_2t_3 \right].
\end{equation}
The map $\Sigma \circ (\psi_1 \times  \psi_2 \times \psi_3)$ extends to a regular map
$
 \phi: \bbP^1 \times \bbP^1 \times \bbP^1 \to \bbP^3;
$
from \eqref{eqn: critical locus phi affine}, we obtain that the critical locus of this extension is the surface $\tilde{\calS}$ of multidegree $(2,2,2)$ defined by the equation
\[
(s_1^2-t_1^2)(s^2_2-t_2^2) (s_3^2-t_3^2) - 8s_1s_2s_3t_1t_2t_3 = 0,
\]
where $[s_i,t_i]$ are homogeneous coordinates on the $i$-th copy of $\bbP^1$.

\begin{theorem}
 The surface $\tilde{\calS}$ is a smooth \textrm{K3} surface. The map $\phi$ is a birational equivalence between $\tilde{\calS}$ and $\calS$. In particular $\tilde{\calS}$ is a desingularization of $\calS$. 
\end{theorem}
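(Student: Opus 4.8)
The plan is to treat the three assertions in turn---smoothness of $\tilde{\calS}$, the \textrm{K3} property, and birationality of $\phi|_{\tilde{\calS}}$---after which the desingularization statement follows formally.

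First I would establish that $\tilde{\calS}$ is smooth. Since $\tilde{\calS} = \{f = 0\}\subseteq(\bbP^1)^3$ with $f$ of multidegree $(2,2,2)$, the multi-homogeneous Jacobian criterion applies: on $\{f=0\}$ the three Euler relations (one per factor) make the radial derivatives dependent, so a point is singular exactly when all six partials $\partial f/\partial s_i$, $\partial f/\partial t_i$ vanish. I would therefore check that the locus $\{f = \partial f/\partial s_1 \vvirg \partial f/\partial t_3 = 0\}$ is empty in $(\bbP^1)^3$. This is a finite computation over the eight affine charts; the $S_3$-symmetry of $f$ (simultaneous permutation of the pairs $(s_i,t_i)$) and the even sign symmetries $s_i\mapsto -s_i$ shorten it, and it can be confirmed in \texttt{Macaulay2}. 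This explicit check is the main hands-on obstacle.

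Granting smoothness, the \textrm{K3} property is formal. By adjunction $K_{\tilde{\calS}} \cong (K_{(\bbP^1)^3}\otimes\calO(2,2,2))|_{\tilde{\calS}} \cong (\calO(-2,-2,-2)\otimes\calO(2,2,2))|_{\tilde{\calS}} \cong \calO_{\tilde{\calS}}$, so the canonical bundle is trivial. To see $h^1(\calO_{\tilde{\calS}}) = 0$ I would use the structure sequence
\[
0 \to \calO_{(\bbP^1)^3}(-2,-2,-2) \to \calO_{(\bbP^1)^3} \to \calO_{\tilde{\calS}} \to 0
\]
together with the Künneth formula: since $H^0(\calO_{\bbP^1}(-2)) = 0$ while $H^1(\calO_{\bbP^1}(-2)) = \bbC$, both $H^1(\calO_{(\bbP^1)^3})$ and $H^2(\calO_{(\bbP^1)^3}(-2,-2,-2))$ vanish, and the long exact sequence forces $h^1(\calO_{\tilde{\calS}}) = 0$ (and $h^0(\calO_{\tilde{\calS}}) = 1$, so $\tilde{\calS}$ is connected). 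A smooth projective surface with trivial canonical class and vanishing irregularity is a \textrm{K3} surface.

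For birationality I would reduce to the addition map. The stereographic parametrizations $\psi_i$ are birational onto the conics $\partial_a D_i$, so $\psi_1\times\psi_2\times\psi_3$ is birational onto $\prod\partial_a D_i$ and carries $\tilde{\calS} = \crit(\phi)$ birationally onto $\crit(\Sigma)$; hence $\deg(\phi|_{\tilde{\calS}}) = \deg(\Sigma|_{\crit(\Sigma)})$ and $\phi(\tilde{\calS}) = \calS$. Now $\Sigma$ is the restriction of a linear projection, so on projective closures $\deg(\overline{\crit\Sigma}) \ge \deg(\Sigma|_{\crit\Sigma})\cdot\deg(\calS)$---this is exactly the inequality underlying the degree bound of Theorem \ref{thm:degS}. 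By Proposition \ref{prop:degCrit}, $\deg(\overline{\crit\Sigma}) = 2^3\binom{3}{2} = 24$, and by the cited computation $\deg(\calS) = 24$. The two degrees coincide, which is possible only if $\deg(\Sigma|_{\crit\Sigma}) = 1$; that is, the upper bound of Theorem \ref{thm:degS} is attained precisely because the map is generically one-to-one. Thus $\phi|_{\tilde{\calS}}\colon\tilde{\calS}\dashto\calS$ is birational. Finally, since $\tilde{\calS}$ is a smooth projective surface birational to $\calS$, it is a desingularization of $\calS$ (and, being a \textrm{K3}, the minimal one), which gives the last claim.
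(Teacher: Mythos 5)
Your proposal is correct and follows essentially the same route as the paper: smoothness by direct (computer-assisted) verification, the K3 property by adjunction plus the structure sequence (you compute $h^2(\calO_{(\bbP^1)^3}(-2,-2,-2))=0$ directly via K\"unneth where the paper invokes Serre duality, an immaterial difference), and birationality by comparing $\deg(\crit\Sigma)=24$ with $\deg(\calS)=24$ under the linear addition map to force generic injectivity. No gaps.
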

\begin{proof}
The smoothness and the irreducibility of $\tilde{\calS}$ are verified by a direct calculation. 

It is a classical fact that a smooth divisor of multidegree $(2,2,2)$ in $\bbP^1 \times \bbP^1 \times \bbP^1$ is a K3 surface. For completeness, we give an explicit proof. Let $\calO_{\tilde{S}}$ and $\omega_{\tilde{\calS}}$ be the structure and the canonical sheaves of $\tilde{S}$, respectively. We verify the two conditions $ \omega_{\tilde{\calS}} \simeq \calO_{\tilde{S}}$ and $h^1(\calO_{\tilde{S}}) = 0$, characterizing a K3 surface.

First, we prove $\omega_{\tilde{\calS}} \simeq \calO_{\tilde{S}}$. This follows from the classical adjunction formula, see, e.g., \cite[Proposition 1.33]{EisHar:3264}. Since $\tilde{\calS}$ is a smooth divisor of multidegree $(2,2,2)$, we have
\[
 \omega_{\tilde{\calS}} = (\omega_{\bbP^1 \times \bbP^1 \times \bbP^1} ( 2,2,2) )\!|_{\tilde{\calS}} = (\calO_{\bbP^1 \times \bbP^1 \times \bbP^1} (-2,-2,-2)\otimes \calO_{\bbP^1 \times \bbP^1 \times \bbP^1} (2,2,2) ) \!|_{\tilde{\calS}}
\]
which is $\calO_{\tilde{\calS}}$. In order to show $h^1(\calO_{\tilde{\calS}}) = 0$, consider the restriction exact sequence of $\tilde{\calS}$:
\[
 0 \to \calI_{\tilde{\calS}} \to \calO_{\bbP^1 \times \bbP^1 \times \bbP^1} \to \calO_{\tilde{\calS}} \to 0.
\]
Again, since $\tilde{\calS}$ is a smooth divisor of multidegree $(2,2,2)$, we have $\calI_{\tilde{\calS}} \!\simeq\! \calO_{\bbP^1 \times \bbP^1 \times \bbP^1} ( -2,-2,-2)$; passing to the long exact sequence in cohomology, we have
\[
 \cdots \to H^1 ( \calO_{\bbP^1 \times \bbP^1 \times \bbP^1} ) \to H^1(  \calO_{\tilde{\calS}} ) \to H^2 ( \calO_{\bbP^1 \times \bbP^1 \times \bbP^1} ( -2,-2,-2) ) \to \cdots
\]
By K\"unneth's formula, $h^1 ( \calO_{\bbP^1 \times \bbP^1 \times \bbP^1} ) = 0$. Since $\calO_{\bbP^1 \times \bbP^1 \times \bbP^1} ( -2,-2,-2) = \omega_{\bbP^1 \times \bbP^1 \times \bbP^1}$, by Serre duality we obtain that $h^2 ( \calO_{\bbP^1 \times \bbP^1 \times \bbP^1} ( -2,-2,-2) )$ coincides with $h^{1} (  \calO_{\bbP^1 \times \bbP^1 \times \bbP^1}) = 0$. We conclude $h^1(  \calO_{\tilde{S}} ) =0$. This shows that $\tilde{\calS}$ is a K3 surface.

It remains to show that $\phi : \tilde{\calS} \to \calS$ is a birational equivalence. This follows again by a direct calculation and by linearity of the addition map. Indeed, the set of critical points of the addition map $\Sigma : \partial_a D_1 \times \partial_a D_2 \times \partial_a D_3 \to \bbC^3$ is clearly birational to $\tilde{\calS}$. Moreover, Theorem \ref{thm:degS} implies that this set, regarded as a subvariety of $\bbC^9 = \bbC^3 \times \bbC^3 \times \bbC^3$, is a surface of degree $24$. Since $\Sigma : \bbC^9 \to \bbC^3$ is linear, the degree of the image of (the birational copy of) $\tilde{\calS}$ is at most $24$; moreover, if equality holds, then $\Sigma$ is generically one-to-one \cite[Theorem 5.11]{Mum:ComplProjVars} and it defines a birational equivalence between the critical locus and its image. Since $\deg(\calS) = 24$, we conclude. 
\end{proof}

The subdivision of $\bbR^3$ into its eight orthants induces a subdivision of the boundary of the dice, hence of the set of its exposed points, i.e., $\calS \cap \partial\calD$. Each of these eight regions can be parametrized by the corresponding arcs on two of the three $\partial D_i$'s.

Let $p\in\calS\cap \partial \calD$ be written as $p = \xi_1 + \xi_2 + \xi_3$, with $\xi_i \in \partial D_i$. 
We parametrize the boundaries of the discs via angles $\theta_1,\theta_2,\theta_3$ as follows: 
\begin{align}
\partial D_1 &= \{ (0,0,1)\cos \theta_1 + (0,1,0) \sin \theta_1 : \theta_1 \in [0,2\pi] \}, \\
\partial D_2 &= \{ (1,0,0)\cos \theta_2 + (0,0,1) \sin \theta_2 : \theta_1 \in [0,2\pi] \}, \\
\partial D_3 &= \{ (0,1,0)\cos \theta_3 + (1,0,0) \sin \theta_3 : \theta_1 \in [0,2\pi] \}.
\end{align}
Then the coordinates of $\xi_3$ can be expressed as algebraic functions of the coordinates of $\xi_1$ and $\xi_2$.
More precisely, from the equation of the determinant \eqref{eqn: critical locus phi affine}, we deduce 
\begin{equation}\label{eq_theta3}
\begin{aligned}
\cos \theta_3 &= \pm \frac{|\sin\theta_1 \sin\theta_2 |}{\sqrt{\cos^2\theta_1 \cos^2\theta_2 + \sin^2\theta_1 \sin^2\theta_2}}, \\
\sin \theta_3 &= \pm \frac{|\cos\theta_1 \cos\theta_2 |}{\sqrt{\cos^2\theta_1 \cos^2\theta_2 + \sin^2\theta_1 \sin^2\theta_2}} .
\end{aligned}
\end{equation}

If $(\theta_1, \theta_2) = (k \frac{\pi}{2}, (k+1)\frac{\pi}{2} ) \times (l \frac{\pi}{2}, (l+1)\frac{\pi}{2} )$, then there are exactly two possible choices of the signs in \eqref{eq_theta3} such that $\xi_1+\xi_2+\xi_3 \in \calS$. This subdivides the real points of $\calS$ into $32 = 4 \cdot 4 \cdot 2$ regions. Exactly eight of these regions cover $\calS\cap \partial \calD$ and they are identified by the condition that $\xi_1, \xi_2, \xi_3$ belong to the same (closed) orthant.
\begin{figure}[h!]
\centering
\includegraphics[width=0.45\textwidth]{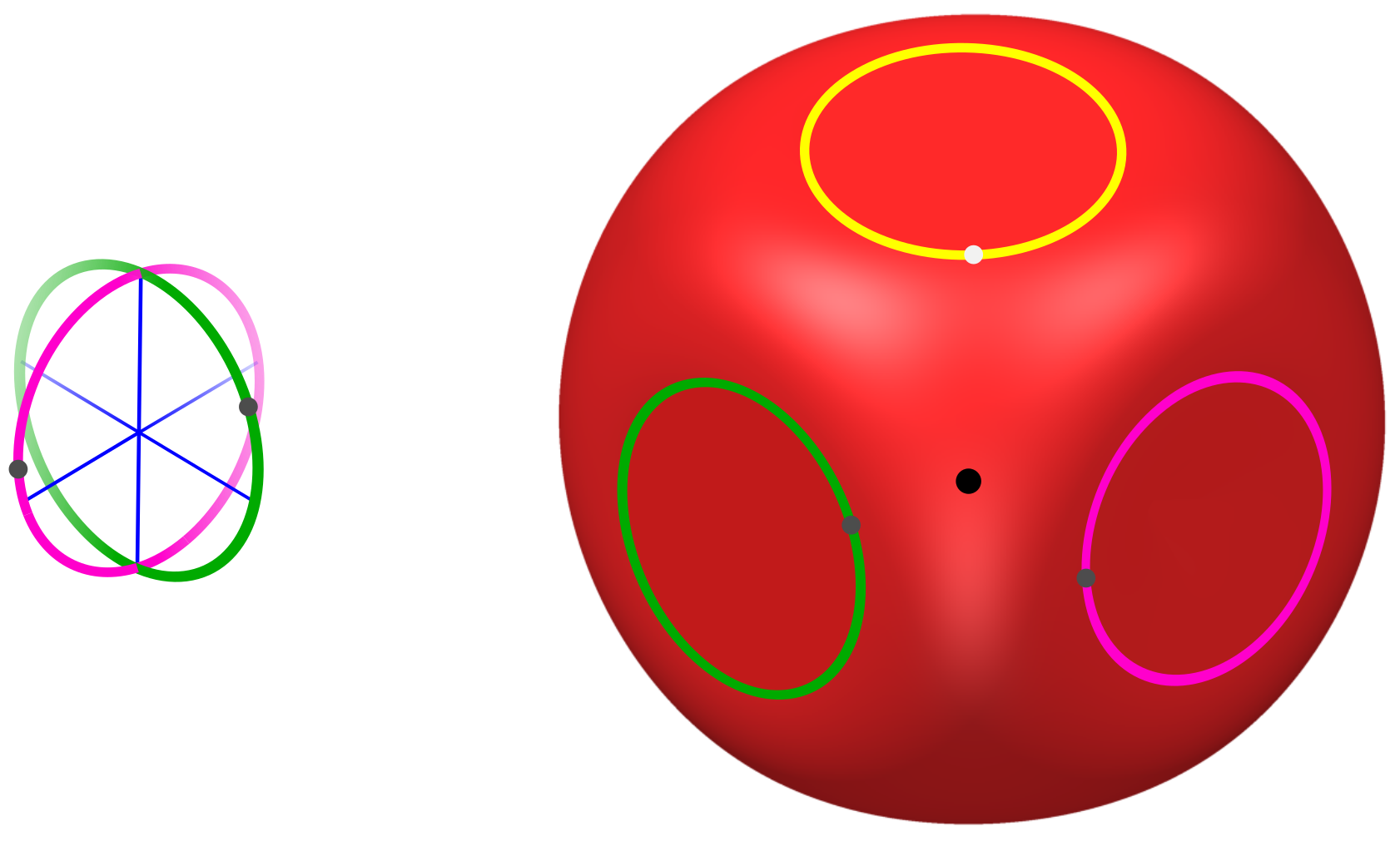}
\caption{Parametrization of $\calS\cap \partial \calD$ with two of the three circles. Given a pair of generic (grey) points on the pink and green circle, there is a unique (white) point on the yellow circle, such that their sum is an extreme exposed point of $\calD$ (black).}
\label{fig:param_dice}
\end{figure}

\section{Conclusions}\label{sec: conclusions}

We summarize our main results concerning the purely nonlinear part of a generic discotope.
\begin{itemize}
 \item $\calS$ is the Zariski closure of the set of exposed points of $\calD$ for the following types:
 \begin{itemize}
 \item[\scalebox{.8}{$\blacktriangleright$}] $\bfN = (0,N, 0 \vvirg 0)$ with $N \geq d-1$;
 \item[\scalebox{.8}{$\blacktriangleright$}] $\bfN = (0 \vvirg 0,N)$.
 \end{itemize}
 \item $\calS$ is irreducible in the following cases:
 \begin{itemize}
 \item[\scalebox{.8}{$\blacktriangleright$}] if \eqref{eq:condition_S} holds with the reverse inequality, in which case $\deg(\calS) = 2^N$;
 \item[\scalebox{.8}{$\blacktriangleright$}] $\bfN = (0,N, 0 \vvirg 0)$ with $N \geq d-1$, in which case $\deg(\calS) \leq 2^N \cdot \binom{N}{d-1}$;
 \item[\scalebox{.8}{$\blacktriangleright$}] $\bfN = (0 \vvirg 0,N)$.
 \end{itemize}
\end{itemize}

In this section, we discuss some open problems, and observations directed toward future work. 

A first question one should address regards an analogue of Theorem \ref{thm:degS} when discs of dimension higher than two are involved. We present an example to explain some of the difficulties.
\begin{example}
Let $D_1 = \{ x_4 = 0, x_1^2 + x_2^2 + x_3^2 = 1 \}$, $D_2 = \{ x_1 = 0, x_2^2 + x_3^2 + x_4^2 = 1 \}$ be two $3$-discs in $\bbR^4$ and let $\calD = D_1+D_2$. This discotope is full dimensional and $\dim \calS = 3$. The ideal of the critical locus of the addition map can be computed via a determinantal method similar to the one discussed in Section \ref{sec:2discs}. We obtain the equation of $\calS$,
\begin{equation}
	x_1^4\!+\!2 x_1^2 x_2^2\!+\!x_2^4\!+\!2 x_1^2 x_3^2\!+\!2 x_2^2 x_3^2\!+\!x_3^4\!-\!2 x_1^2 x_4^2\!+\!2 x_2^2 x_4^2\!+\!2 x_3^2 x_4^2\!+\!x_4^4\!-\!4 x_2^2\!-\!4 x_3^2\! = \!0,
\end{equation}
which is irreducible of degree $4$.
The boundary $\partial \calD$ contains translates of the $3$-dimensional discs: two translated copies of $D_1$ at $x_4 = \pm 1$ and two translated copies of $D_2$ at $x_1 = \pm 1$. The four points of their pairwise intersections are the only points exposed by more than one vector: these points are $(1,0,0,1), (-1,0,0,1), (-1,0,0,-1), (1,0,0,-1)$ and they are, respectively, exposed by the cones
\begin{align}
C_1 &= \{ x_2 = x_3 = 0, x_1 > 0, x_4 > 0 \}, \\
C_2 &= \{ x_2 = x_3 = 0, x_1 < 0, x_4 > 0 \},\\
C_3 &= \{ x_2 = x_3 = 0, x_1 < 0, x_4 < 0 \}, \\
C_4 &= \{ x_2 = x_3 = 0, x_1 > 0, x_4 < 0 \}.
\end{align} 
Notice that for every $i$ and for every $u \in  C_i$, the hyperplane $u^\perp$ contains  $\langle D_1 \rangle \cap \langle D_2 \rangle$, as observed in Example \ref{ex:two_planes_vertices}.
\end{example}

We point out that the determinantal method mentioned above to obtain the equation of $\calS$ is not as straightforward as in the case of $2$-dimensional discs. Implicitly, this method relies on a parametrization of the tangent bundle of the product $\partial_a D_1 \ttimes \partial_a D_N$, in order to impose that the differential of the addition map has submaximal rank. For higher dimensional spheres this parametrization cannot be global since their tangent bundles are not trivial, unlike the case of the circle. 
Nevertheless, in the cases where it can be computed explicitly, the hypersurface $\calS$ is irreducible, hence it is the Zariski closure of the set of exposed points of $\calD$. We propose the following:
\begin{conj}\label{conj:irr_S}
Let $\calD$ be a generic discotope of type $(0,N_2\vvirg N_d)$. Then $\calS$ is irreducible.
\end{conj}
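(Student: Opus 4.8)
The plan is to deduce the statement from the irreducibility of the critical locus $\crit(\Sigma)$ of the addition map $\Sigma\colon\prod_{i=1}^N\partial_a D_i\to\bbC^d$, in the spirit of the proof of Theorem~\ref{thm:degS}. By Lemma~\ref{lem:Minkowski_sum} a point $\Sigma(\bfxi)$ lies on $\partial\calD$ exactly when the tangent spaces $T_{\xi_i}\partial D_i$ all lie in a common hyperplane; hence $\calD^\partial\cap\partial\calD=\Sigma(\crit_\bbR(\Sigma))$, and since the real critical points are Zariski dense in $\crit(\Sigma)$ (they include the smooth points produced in Proposition~\ref{prop:vertices}), one has $\calS=\overline{\Sigma(\crit(\Sigma))}$. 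Thus it suffices to show $\crit(\Sigma)$ is irreducible, after which $\calS$ is irreducible as the closure of the image of an irreducible variety under the linear map $\Sigma$. The difficulty flagged above---the non-triviality of the tangent bundle of a higher-dimensional sphere---obstructs only the determinantal method of Section~\ref{sec:2discs}; the approach below avoids it by passing to an incidence variety and computing a monodromy.

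Let $\calH=(\bbP^{d-1})^\vee$ be the space of hyperplanes through the origin, with $[u]\in\calH$ corresponding to $u^\perp$. For each $i$ set
\begin{equation}
 \calN_i=\bigl\{(\xi_i,[u])\in\partial_a D_i\times\calH : T_{\xi_i}\partial_a D_i\subseteq u^\perp\bigr\},
\end{equation}
and let $\calW=\calN_1\times_\calH\cdots\times_\calH\calN_N$ be the fiber product. Forgetting $[u]$ maps $\calW$ onto $\crit(\Sigma)$, because $\bfxi$ is critical precisely when $\sum_i T_{\xi_i}\partial_a D_i$ lies in some hyperplane; by Lemma~\ref{lem:omega} this map is generically one-to-one. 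So it is enough to prove that the dominant part of $\calW$ is irreducible. The first projection $\calN_i\to\partial_a D_i$ exhibits $\calN_i$ as the projectivized annihilator of the tangent bundle of the quadric $\partial_a D_i$, with fibers $\bbP^{d-\dim D_i}$; as a projective bundle over an irreducible base, each $\calN_i$ is irreducible of dimension $d-1$.

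The second projection $p_i\colon\calN_i\to\calH$ is a double cover: a pole computation shows that for generic $[u]$ there are exactly two points $\pm\xi_i\in\partial_a D_i$ with $T_{\xi_i}\partial_a D_i=u^\perp\cap\langle\calB_i\rangle$, the two branches coming together along the dual quadric $R_i=\{[u]:u^\perp\text{ is tangent to }\partial_a D_i\}$, while positive-dimensional fibers occur only over $\bbP(\langle\calB_i\rangle^\perp)$, of codimension $\dim D_i\geq 2$. Since $\calN_i$ is irreducible, $p_i$ is a connected double cover, so its monodromy character $\rho_i\colon\pi_1(\calH^\circ)\to\bbZ/2$ is surjective, where $\calH^\circ$ is the complement of $\bigcup_i R_i$. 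For generic discs the quadrics $R_1\vvirg R_N$ are pairwise distinct, so each admits a meridian $\mu_i\in\pi_1(\calH^\circ)$; as $p_i$ branches only along $R_i$, one gets $\rho_i(\mu_j)=\delta_{ij}$. Hence $\mu_1\vvirg\mu_N$ map to a basis of $(\bbZ/2)^N$ under $(\rho_1\vvirg\rho_N)$, the combined monodromy is surjective and transitive on the $2^N$ sheets, and $\calW$ is irreducible over $\calH^\circ$. Therefore $\crit(\Sigma)$, and with it $\calS$, is irreducible.

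The hardest part, I expect, is exactly the issue raised by the authors: making rigorous the identity $\calS=\overline{\Sigma(\crit(\Sigma))}$ with no spurious lower-dimensional components, i.e.\ checking that the dominant component of $\calW$ already accounts for all of $\calD^\partial\cap\partial\calD$ and that $\Sigma$ does not mix it with contributions from positive-dimensional faces. The monodromy computation also rests on genericity statements that must be verified rather than assumed---principally that the branch quadrics $R_i$ are genuinely distinct, so that independent meridians exist, and that these meridians may be taken disjoint from the indeterminacy loci $\bbP(\langle\calB_i\rangle^\perp)$; the latter is automatic since those loci have codimension $\dim D_i\geq 2$ in $\calH$. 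Carrying out this genericity bookkeeping, rather than the topological heart of the argument, is where the real work lies.
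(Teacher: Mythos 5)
The statement you are proving is Conjecture \ref{conj:irr_S}: the paper offers no proof of it, only the partial results of Theorem \ref{thm: joins}, Remark \ref{rmk:irr_top_discs} and Theorem \ref{thm:degS}, and it explicitly names the obstruction --- that $\calD^\partial$ may meet positive-dimensional faces of $\calD$ and thereby contribute extra (possibly lower-dimensional) components to $\calS$. Your proposal does not close this gap; it relocates it. The monodromy argument, even granting all its ingredients, only shows that $\overline{\calW^\circ}$ --- the closure of the part of the incidence variety lying over the good locus $\calH^\circ$ --- is irreducible. But $\crit(\Sigma)$ is the image of all of $\calW$, and $\calW$ may have additional pieces supported over $\bigcup_i R_i$ and $\bigcup_i \bbP(\langle\calB_i\rangle^\perp)$ (over the latter the fiber of $\calN_i$ is all of $\partial_a D_i$, not two points). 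These pieces are lower-dimensional, so they do not affect the irreducibility of the top-dimensional part, but they can produce lower-dimensional components of $\crit(\Sigma)$, hence of $\overline{\Sigma(\crit\Sigma)}$, hence potentially of $\calS$ --- and ``$\calS$ is irreducible'' means precisely that no such components exist. Calling this ``genericity bookkeeping'' understates it: it is the actual content of the conjecture, and nothing in your argument addresses it. Note that in Theorem \ref{thm:degS} the paper proves $\crit(\Sigma)$ is irreducible as a variety (via Bertini applied to a linear section of a determinantal variety), which is exactly the stronger statement your approach does not deliver.

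Two further points need repair even for the part you do prove. First, the identity $\calS=\overline{\Sigma(\crit(\Sigma))}$ is asserted via ``the real critical points are Zariski dense in $\crit(\Sigma)$,'' which is circular (density of a real $(d-1)$-dimensional family in one component says nothing about other components) and also conflates $\Sigma(\crit_{\bbR}\Sigma)$ with $\calD^\partial\cap\partial\calD$: a real critical point with mixed signs $\pm\xi_i$ has all tangent spaces in a common hyperplane but need not map to $\partial\calD$ (this is visible in Figure \ref{fig:planar_discotopes}, where most of the real locus of $\calS$ is interior to $\calD$). The clean route is the one inclusion $\calS\subseteq\overline{\Sigma(\crit\Sigma)}$ from Proposition 5.2, combined with irreducibility and a dimension count --- but that again requires $\overline{\Sigma(\crit\Sigma)}$ to be irreducible, not just its dominant part. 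Second, the covers $p_i:\calN_i\to\calH$ are not branched double covers whose ``branches come together along $R_i$'': over the isotropic locus the line $u^{\perp_B}\cap\langle\calB_i\rangle$ fails to meet the affine quadric $\{q_{\calB_i}=1\}$ at all, so the fiber is empty and the cover is \'etale over its image. The local $\bbZ/2$ monodromy around $R_i$ does hold (it is the square-root cover of $q_{\calB_i}$), so the conclusion $\rho_i(\mu_j)=\delta_{ij}$ survives, but as written the justification is wrong. With these caveats, the incidence-variety-plus-monodromy idea is a genuinely interesting way to get irreducibility of the main component of $\crit(\Sigma)$ for discs of arbitrary dimension, sidestepping the non-triviality of the tangent bundles of higher spheres; it is a promising partial result, not a proof of the conjecture.
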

Theorem \ref{thm: joins} proves the conjecture if \eqref{eq:condition_S} holds with the reverse inequality. Remark \ref{rmk:irr_top_discs} proves the statement in the case $(0\vvirg 0,N)$, and Theorem \ref{thm:degS} in the case $(0,N,0 \vvirg 0)$.

In general, we expect the critical locus of $\Sigma$ to be already irreducible, and the addition map $\Sigma$ to be a birational equivalence between $\crit\Sigma$ and $\calS$. Were this true, in the case of $2$-dimensional discs, the upper bound in Theorem \ref{thm:degS} would be an equality. For higher dimensional discs, even under the assumption that the critical locus is irreducible, computing its degree is not trivial and it would be interesting to address it via the classical Giambelli-Thom-Porteous construction, applied to the product of the tangent bundles of the spheres $\partial_a D_i$.

The geometric features highlighted in this work can be used as necessary conditions for a convex body to be a discotope: for instance, there are restrictions for the degrees of the Zariski closure of the set of exposed points. An important future step would be to understand a characterization of discotopes among zonoids or more generally among convex bodies, in the spirit of the zonoid problem. We identify two problems in this direction. 

\begin{problem}\label{problem: characterize discotopes}
 Let $K \subseteq \bbR^d$ be a convex body and $D\subseteq \bbR^d$ be an $n$-dimensional (generalized) disc. Determine whether $D$ is a Minkowski summand of $K$, in the sense that there exists a convex body $K' \subseteq \bbR^d$ such that $K = K' + D$. 
\end{problem}
Problem \ref{problem: characterize discotopes} is understood in the case where $D$ is a disc of dimension $1$, i.e. a segment \cite[Lemma 3.4]{Bol:ClassConvexBodies}. We state the next problem in the language of \cite{BreBueLerMat:ZonoidAlgebra}.
\begin{problem}
Characterize the set of random vectors of $\bbR^d$ whose associated Vitale zonoid is a full dimensional discotope in $\bbR^d$.
\end{problem}
Finally, we expect discotopes not to be spectrahedra, except possibly for small special cases, for instance when $N=1$. However, they are \emph{spectrahedral shadows} \cite{Scheid:SpectraShadows} since they are defined as Minkowski sums of spectrahedra. 
In Section \ref{sec: faces} we observed that $\calD$ is the convex hull of the semialgebraic set $\calS \cap \partial \calD$. 
We propose the following conjecture, which is verified in the cases that we can compute explicitly.
\begin{conj}
The discotope $\calD$ is the convex hull of the real points of $\calS$.
\end{conj}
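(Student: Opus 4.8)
The plan is to prove the conjecture by establishing the two inclusions $\calD \subseteq \conv(\calS(\bbR))$ and $\conv(\calS(\bbR)) \subseteq \calD$ separately, writing $\calS(\bbR) = \calS \cap \bbR^d$ for the set of real points of $\calS$. The first inclusion is immediate: by the discussion following Definition \ref{def:S}, the Krein--Milman Theorem gives $\calD = \conv(\calS \cap \partial\calD)$, and since $\calS \cap \partial\calD \subseteq \calS(\bbR)$ we obtain $\calD \subseteq \conv(\calS(\bbR))$. Because $\calD$ is closed and convex, the reverse inclusion $\conv(\calS(\bbR)) \subseteq \calD$ is equivalent to the pointwise statement $\calS(\bbR) \subseteq \calD$, and this is where the real content lies.

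First I would dispose of the real points of $\calS$ that admit a real critical preimage. Recall from Section \ref{sec:2discs} that $\calS$ is the Zariski closure of $\Sigma(\crit\Sigma)$, where $\crit\Sigma \subseteq \prod_j \partial_a D_j$ is the critical locus of the addition map. If $\bfxi = (\xi_1 \vvirg \xi_N) \in \crit\Sigma(\bbR)$ is a real critical point, then each $\xi_j$ is a genuine real boundary point of $D_j$, so $\xi_j \in D_j$ and hence $\Sigma(\bfxi) = \textsum_j \xi_j \in \textsum_j D_j = \calD$. Thus $\Sigma(\crit\Sigma(\bbR)) \subseteq \calD$, and since $\calD$ is Euclidean-closed, so is its Euclidean closure. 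Consequently the conjecture reduces to
\[
 \calS(\bbR) \subseteq \overline{\Sigma\bigl(\crit\Sigma(\bbR)\bigr)}^{\,\mathrm{eucl}},
\]
i.e.\ to showing that \emph{every} real point of $\calS$ is a Euclidean limit of images of real critical configurations.

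To attack this inclusion I would exploit the irreducibility of $\crit\Sigma$ proved in Proposition \ref{prop:degCrit}. The real configurations constructed in Lemma \ref{lem:omega} and Proposition \ref{prop:vertices} furnish a $(d-1)$-dimensional family of real critical points, so $\crit\Sigma(\bbR)$ has dimension $d-1$ and is therefore Zariski-dense in the irreducible variety $\crit\Sigma$. Its image $\Sigma(\crit\Sigma(\bbR))$ is then a $(d-1)$-dimensional semialgebraic subset of $\calS(\bbR)$ which is Zariski-dense in $\calS$, and so it already contains a Euclidean-open dense subset of the smooth real locus of $\calS$. What remains is to rule out \emph{spurious} connected components of $\calS(\bbR)$ — real branches of the algebraic boundary, like the dashed curves in Figure \ref{fig:planar_discotopes} — that might fail to meet $\Sigma(\crit\Sigma(\bbR))$ and could in principle lie outside $\calD$.

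The hard part is exactly this control of the connected components of $\calS(\bbR)$, and I would try two complementary strategies. The first is a parity argument generalizing the computation at the end of Section \ref{sec:2discs}: over a real point $p \in \calS$ the fiber $\Sigma^{-1}(p) \cap \crit\Sigma$ is defined over $\bbR$, so complex conjugation permutes it; if one shows this fiber has odd cardinality (for instance by proving that $\Sigma : \crit\Sigma \to \calS$ has odd degree and that non-real points occur in conjugate pairs, as verified explicitly for $d=2$), then a real preimage always exists and $p \in \calD$ follows. The second is a continuity argument: using the explicit real parametrization of $\crit\Sigma(\bbR)$ by a common normal direction together with a choice of tangency points on each $\partial D_j$ (compare \eqref{eq_theta3} and the $2^{N-1}$ real components in the planar case), one would show that the vanishing of the \emph{real} determinant cutting out the critical condition forces any real point of $\calS$ to admit real tangency data. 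Establishing that no real component of $\calS$ escapes $\calD$ — equivalently, that $\Sigma|_{\crit\Sigma}$ is surjective onto $\calS(\bbR)$ — is the decisive and most delicate step, precisely the point where the argument must go beyond the Zariski-density bookkeeping that already handles the generic part of the real locus.
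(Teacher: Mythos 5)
This statement is left as an open conjecture in the paper (the authors only note that it ``is verified in the cases that we can compute explicitly''), so there is no proof of record to compare against; your proposal must stand on its own, and as written it does not close the problem. The parts you do establish are correct: the inclusion $\calD \subseteq \conv(\calS \cap \bbR^d)$ via Krein--Milman is exactly the observation already made after Definition \ref{def:S}, the reduction of the converse to the pointwise containment $\calS \cap \bbR^d \subseteq \calD$ is right since $\calD$ is closed and convex, and the remark that any \emph{real} critical configuration $\bfxi \in \prod_j \partial D_j$ has $\Sigma(\bfxi) \in \calD$ is immediate. You have therefore correctly isolated the real content of the conjecture: every real point of $\calS$ must be shown to admit (a limit of) real tangency data.

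That decisive step is precisely what is missing, and both strategies you sketch rest on inputs that are themselves unproven. The parity route requires $\Sigma : \crit\Sigma \to \calS$ to have odd degree, which the paper verifies only for type $(0,N)$ in $\bbR^2$ at the end of Section \ref{sec:2discs}; in general this is essentially the paper's own open expectation (stated in Section \ref{sec: conclusions}) that $\Sigma$ restricts to a birational equivalence $\crit\Sigma \to \calS$. Even granting odd degree, the conjugation argument only produces a real preimage over points in the Zariski-open locus where the fiber is finite and reduced of cardinality equal to the degree; a connected component of $\calS \cap \bbR^d$ of dimension $< d-1$ could lie entirely inside the non-generic locus and escape the argument, and ruling this out is not a formality. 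The continuity route, as you state it, presupposes that the real determinantal equations cutting out $\crit\Sigma$ are solvable over every real point of $\calS$ --- but that is a reformulation of the claim to be proved, not a proof of it. Finally, your appeal to Proposition \ref{prop:degCrit} confines the discussion to type $(0,N,0 \vvirg 0)$, whereas the conjecture concerns arbitrary discotopes, for which even the irreducibility of $\calS$ is only conjectural (Conjecture \ref{conj:irr_S}). In short: correct setup and correct identification of the obstacle, but the proof stops exactly where the conjecture begins.
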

This would provide examples of real algebraic varieties whose convex hull is a spectrahedral shadow. This topic has been studied for instance in \cite{Scheid:ConvHull, RosStu:dualities} and is related to the Helton--Nie conjecture \cite{HelNie:SemidefiniteRepresentationConvexSets}.
Such questions draw connections between discotopes and the world of convex algebraic geometry, optimization and semidefinite programming.

\bibliographystyle{plain}
\bibliography{biblio}
    
\end{document}